\newtheorem{assumption}{Assumption}
\newtheorem{lemma}{Lemma}
\newtheorem{theorem}{Theorem}
\newtheorem{example}{Example}
\newtheorem{remark}{Remark}
\definecolor{dgray}{gray}{0.6}
\definecolor{lgray}{gray}{0.8}
\newcommand{\ignore}[1]{}
\def\bbbz{{\mathchoice {\hbox{$\sf\textstyle Z\kern-0.4em Z$}}
{\hbox{$\sf\textstyle Z\kern-0.4em Z$}}
{\hbox{$\sf\scriptstyle Z\kern-0.3em Z$}}
{\hbox{$\sf\scriptscriptstyle Z\kern-0.2em Z$}}}}
\newcommand{\ba}{\begin{array}}
\newcommand{\ea}{\end{array}}
\newcommand{\beq}{\begin{equation}}
\newcommand{\eeq}{\end{equation}}
\newcommand{\beqy}{\begin{eqnarray}}
\newcommand{\eeqy}{\end{eqnarray}}
\newcommand{\beqyn}{\begin{eqnarray*}}
\newcommand{\eeqyn}{\end{eqnarray*}}
\newcommand{\bi}{\begin{itemize}}
\newcommand{\ei}{\end{itemize}}
\newcommand{\bseq}{\begin{subeqnarray}}
\newcommand{\eseq}{\end{subeqnarray}}
\newcommand{\bex}{\begin{example}}
\newcommand{\eex}{\end{example}}
\newcommand{\bexer}{\begin{exercise}}
\newcommand{\eexer}{\end{exercise}}
\newcommand{\bmct}{\begin{fact}}
\newcommand{\efct}{\end{fact}}
\patchcmd{\IEEEproofindentspace}{2\parindent}{-2pt}{}{}
\begin{document}

\title{Algorithms for constrained optimal transport}

% author names and affiliations
% use a multiple column layout for up to three different
% affiliations
%\author{\IEEEauthorblockN{Shravan Sajja}
%\IEEEauthorblockA{Hamilton Institute,\\NUI Maynooth,\\
%Co. Kildare,
%Ireland.\\
%surya.sajja.2009@nuim.ie}
%\and
%\IEEEauthorblockN{M. Corless}
%\IEEEauthorblockA{School of Aeronautics\\
%and Astronautics,\\ Purdue University,\\West Lafayette, IN, USA.}
%\and
%\IEEEauthorblockN{E. Zeheb}
%\IEEEauthorblockA{Technion-Israel Institute\\ of Technology, Haifa and\\ Jerusalem College of \\Engineering, Jerusalem, Israel.}
%\and
%\IEEEauthorblockN{R. Shorten}
%\IEEEauthorblockA{Hamilton Institute,\\NUI Maynooth,\\
%Co. Kildare,
%Ireland.}}
\author{ Martin Corless\thanks{{\bf Affiliations:} Martin Corless (corless@purdue.edu), is with the School  of Astronautics and Aeronautics, Purdue University, West Lafayette. The other authors are with the Dyson School of Design Engineering, Imperial College London. Anthony Quinn is also with the Dept.\  of Electronic \& Electrical Engineering, Trinity College Dublin. } and Anthony Quinn and Sarah Boufelja Y. and Robert Shorten. % <-this % stops a space
}

\maketitle

\parindent 0mm

\vspace*{-.4cm}
\begin{abstract}
We derive iterative scaling algorithms of the Sinkhorn-Knopp (SK) type for  constrained optimal transport. The constraints are in the form of prior-imposed zeroes in the  transport plan. Based on classical Bregman arguments, we prove  asymptotic convergence of our algorithms to a unique optimal  solution.  New insights obtained from the convergence proof are highlighted. An example from electrical vehicle charging in a smart city context is outlined, in which the prior zero-constraints prevent energy from being transported from some providers to some vehicles. 
\end{abstract}

\section{Introduction: Optimal Transport (OT)}

Optimal Transport (OT) has emerged as a key enabling mathematical technology which is driving a growing number of contemporary engineering applications in fields such as machine learning, image processing, and in optimization and control \cite{control}, \cite{pmlr-v37-kusnerb15}, \cite{10.1007/978-3-642-15561-1_16}.  The field is an old one, dating back to Monge in the 18th century.
%, and is consequently a mature area of mathematics. 
Nevertheless, OT is now attracting accelerated attention from both theoreticians and practitioners alike, with widespread application in engineering practice. OT refers to a class of problems in which we seek to transport a finite  resource (e.g.\ mass) from a distributed source to a distributed target, in some optimal manner. For example, the classical problems of Monge and Kantorovitch relate to the transportation of a distributed pile of sand \cite{vil} from one location to another. The contemporary analogues of these formulations (for example, in economics) arise in resource (re)allocation problems,  and in supply-demand matching problems~\cite{gal}. Closely related problems arise in key machine learning (ML) settings, such as domain adaptation. Here, source data are optimally adapted (i.e.\ transported) for use in a related, but distinct, target domain~\cite{Courty}. An example is ``data repair'',  in which  OT is used to de-bias ML tasks in order to  improve their fairness properties. Classifiers and recommender systems---trained on data with fairness deficiencies such as representation bias, dependence on protected attributes, {\em etc}.---can be fairness-repaired using methods of OT \cite{gor}.  \newline

In this paper, we confine ourselves to the discrete case, in which the resource (mass) is distributed across a finite (though typically large) number of states in both the source and target.  For every pair of source-target states, the designer specifies a cost per unit mass of transporting the resource from this  source state to this target state. The OT problem then involves the design of a transport plan that specifies the amount of resource to be transported for every one of these source-target state pairs, in a way that minimizes the total cost of transport.  A  feature of our setting of the OT problem in this paper is that we impose constraints in which {\em some source states are not allowed to  transport to some target states}. {These prior zero-transport constraints may arise for a variety of reasons. For example, in economics, certain producers may not be allowed to supply a product to certain consumers. In  fairness-aware domain adaptation for ML (above), data for a particular demographic group may not be mapped to a different demographic group.} {These OT problems give rise to  linear programs of very large scale~\cite{pey}. Linear programming solvers are known to scale poorly, and much of the current OT work focuses on regularizing  the classical OT problem to ensure tractability at scale. The best known of these approaches is the {\em entropic regularisation} of the OT problem~\cite{pey}. In essence, this amounts  to the addition of an extra term in the (total cost) objective, which penalizes a specified divergence of the transport plan from a prior-defined ideal plan \cite{BoufeljaAl2023}.  The resulting optimization problem is then strongly convex on a compact convex set, and can be solved efficiently using a Sinkhorn-Knopp-type (SK) iterative scaling algorithm, with its favourable large-scale computational properties~\cite{cut1}, \cite{ger}. These computational OT results and guarantees---based on the SK algorithm--- have been central to the widespread adoption of the OT paradigm in applications, particularly in ML  \cite{vil}, \cite{Courty}, \cite{wasgan},  \cite{article}.}\newline
 
 In contrast to previous results in the literature, our purpose in this paper is to consider OT problems in which some elements of the transport plan are forced to be zero {\em a priori},
 and to derive  iterative scaling algorithms of the SK-type to solve these problems. We use  Bregman-type arguments \cite{Bregman1967} to prove convergence of our algorithms to the (unique) solution of the  OT problems under consideration. We outline an  application of our results in a smart cities context, where prior zeroes in the transport plan are an important constraint.

 \section{Prior work on OT and the SK Algorithm}

 OT problems are often cast in the language of probability, where the resource (mass) to be transported is a probability measure, and so the net resource (mass) is  normalized to one. The transport plan is thus a bivariate  probability mass function (pmf), and the source and target distributions are its two marginals. We speak of a transport plan that {\em moves} the source distribution (marginal) to the target distribution (marginal).  In this way,  OT can be viewed as a mathematical framework for measuring the distance between (source and target marginal) probability distributions, this distance being  the minimal expected cost associated with the optimal transport plan~\cite{vil}. This Kantorovich-Rubinstein (also called Wasserstein or earth-mover's) norm~\cite{vil} for classical entropy-regularized OT gives rise to a unique solution of a strongly convex optimization problem on a compact convex support~\cite{brualdi_1968}, \cite{BaradatAl2022}, with linear convergence of the SK iterative scaling algorithm to this unique solution~\cite{doi:10.1137/060659624}. If, {\em as in this paper}, we impose prior zero-constraints on some elements of the transport plan, 
 %as we will see in Section~\ref{sec:OT}), 
 %then it may never be iteratively scalable to a solution that 
 there may not exist a plan that satisfies exact marginal constraints at the source and target, as already noted. Another situation in which a  transport plan satisfying exact marginal constraints  does not exist is unbalanced OT (UOT) \cite{BaradatAl2022}, in which the prescribed net masses of the  source and target are not the same.
 %is {\em not\/} conserved during transport from the source to the target: a relaxation of both the source and target distributions in the space of positive measures is allowed. This case also does not allow 
 Earlier, in~\cite{ChizatAl2018}, an SK-type iterative scaling algorithm was derived for unconstrained UOT  (i.e.\ without prior zero-constraints on the transport plan), by relaxing the   source and target marginal constraints via  extra terms in the cost that penalize noncompliance with the marginal constraints. This was shown to converge linearly to a unique solution. 
 In \cite{BaradatAl2022}, the authors study the UOT 
 %\textit{non-scalable} 
 case with prior zero-constraints on the transport plan.
 %, and, once again, with source and target relaxations. 
 They prove that the SK algorithm has two convergent subsequences in this case and they propose a transport plan which is the component-wise geometric mean of 
 the limits plans of the two convergent subsequences
% We emphasize that---in our prior-zero-constrained setting---we relax only the target marginal: the source marginal is satisfied exactly, as required in many practical contexts. 
  \newline
 %then converges to {\em two\/} distinct fixed points. 
% They show that the component-wise geometric mean of 
 %these two fixed points (plans) 
% limits of these subsequences is a limiting solution to 
% an {\em unbalanced\/} setting of the OT problem (UOT)~\cite{Benamou2010}. .  
%a parameterized optimal OT problem. \newline 
 
The paper is organized as follows: in Section~\ref{sec:OTzero}, our OT problem is specified with prior constraints on the pattern of the transport plan. After developing background material, two  iterative scaling algorithms are presented  in Section \ref{sec:algore's_favorite_dance} for OT problems with prior zero-constraints.  
Section V presents some properties of the resulting OT plans.
Section  VI provides the proof of convergence or our algorithms.
Finally, a use-case from the domain of smart mobility
is outlined in Section \ref{sec:experiment}, illustrating  the efficacy of our new algorithms.   \newline

\section{The Optimal transport (OT) problem  with prior zero-constraints}
\label{sec:OTzero}
%Consider the matrix balancing problem in which one has a non-negative matrix $ A \in \mathbb{R}^{m\times n}$ and two non-negative vectors $\tilde{u} \in \mathbb{R}^m$, $\tilde{v} \in \mathbb{R}^n$
Consider an OT problem in which we  have a finite number, $m$, of {\em source agents}  (i.e.\ source states), $i=1,2, \dots, m$,  and a finite number, $n$, of {\em target agents} (i.e.\ target states),  $j= 1,2, \dots, n$.
Suppose each source agent, $i$, has capacity or mass,  $\tilde{u}_i  >0$,  and each target agent, $j$, desires a capacity or mass, $\tilde{v} _j >0$.
We wish to transport the masses of the source agents to the target agents.
For the time being,  we impose mass balance (conservation), i.e.\ 
\begin{align}
\label{eq:ut=vt}
%\sum_{i=1}^m u_i= 1\qquad \sum_{j=1}^n \tilde{v}_j= 1
\sum_{i=1}^m \tilde{u}_i =\sum_{j=1}^n \tilde{v}_j 
\end{align}
that is, ${\bm 1}'\tilde{u} = {\bm 1}'\tilde{v}$  where   ${\bm 1}$ is a vector of ones, of appropriate length. Let
$
t_{ij} \ge 0
$
be the mass or capacity that source agent, $i$,  transports to  target agent, $j$.
We will refer to  the $m \times n$ matrix, $T=\left\{t_{ij}\right\}$, as the {\em transport plan}.
%Since each source agent, $i$, requires the respective mass or capacity, $\tilde{u}_i$,
%we require  satisfaction of the following row marginal constraints:
As a consequence of the capacity constraints of the source and target agents, we must have
\begin{align}
\label{eq:rowconstraints}
\sum_{j=1}^nt_{ij}  &= \tilde{u}_i,  \qquad i=1, \dots, m
\end{align}
or, $T {\bm 1} = \tilde{u}$, and
\begin{align}
\label{eq:colconstraints}
\sum_{i=1}^m t_{ij} = \tilde{v}_j,  \qquad j=1, \dots, n
\end{align}
or, $T' {\bm 1} = \tilde{v}$.
Note that constraints  \eqref{eq:ut=vt}, \eqref{eq:rowconstraints}  and \eqref{eq:colconstraints} imply that 
\begin{equation}
\sum_{i=1}^m\sum_{j=1}^n t_{ij} =\sum_{i=1}^m \tilde{u}_i = \sum_{j=1}^n \tilde{v}_j\
\label{eq:bal}
\end{equation}

Next, suppose that a  specific source agent, $i$, never  transports to a specific target agent, $j$, so that 
 $t_{ij}= 0$.
 % for some $(i,j)$.
We let $\mathcal {Z}$ be the set of these prior non-transporting  index pairs:
\begin{equation}
\label{eq:U}
t_{ij}= 0  \text{ if } (i ,j) \in \mathcal {Z} 
\end{equation}
%\begin{equation}
%\label{eq:U}
%{\mathcal I}= \left\{ (i,j): t_{ij}= 0  \right\}
%\end{equation}
The set of allowable transport plans is given by
%\small{
%\begin{align}
%\label{eq:Tset}
%\mathcal{T} = \left\{ T \in \mathbb{R}^{m\times n}: \, t_{ij} \ge 0\, 
%\forall\, (i,j) \, 
 %\,
 %t_{ij} = 0\, \forall (i,j) \in \mathcal{Z} \right\}
%\end{align}
%}
%\begin{align}
%\mathcal{T} \equiv \left\{ T \in \mathbb{R}^{m\times n} :  \begin{array}{ll} t_{ij}  \ge 0 &\forall  (i,j) \notin  \mathcal{Z} \\ 
%t_{ij} = 0 &\forall (i,j) \in  \mathcal{Z} \end{array}  \right\}
%\label{eq:Tset}
%\end{align}
\begin{align}
\mathcal{T} \equiv \left\{ T \in \mathbb{R}^{m\times n} :  \begin{array}{ll} t_{ij}  \ge 0 &\forall  (i,j) \notin  \mathcal{Z} \\ 
t_{ij} = 0 &\forall (i,j) \in  \mathcal{Z} \end{array}  \right\}
\label{eq:Tset}
\end{align}
We assume that for every $i$, that there is at least one $(i, j) \notin \mathcal{Z}$
and for every $j$, there is also at least one $(i, j) \notin \mathcal{Z}$;
i.e.\ if 
%an element of $\mathcal{T}$ has 
$t_{ij} >0$ for all $(i, j) \notin \mathcal{Z}$
then, $T$ has   no zero-rows or zero-columns.
If $\mathcal{Z}$ is empty, i.e.\ if every source agent can transport to every target agent, then the marginal constraints \eqref{eq:rowconstraints} and \eqref{eq:colconstraints} are feasible (i.e. there is at least one solution in $\mathcal{T}$, for example, 
$T \equiv  \tilde{u}\tilde{v}'/{\bm 1}'\tilde{v}$).
 However, if $\mathcal{Z}$ is non-empty, then \eqref{eq:rowconstraints} and \eqref{eq:colconstraints}  may not be feasible (i.e.\ there may not be any solution in $\mathcal{T}$). An example is given  by  $m=n=2$ and  $\mathcal{Z} \equiv \{(2,2)\}$: 
\[
T=\left(\begin{array}{cc}
t_{11} & t_{12}\\
t_{21}& 0
\end{array}
\right)
\qquad \mbox{with} \qquad  \tilde{u} = \tilde{v} =\left(\begin{array}{c} 1\\ a
\end{array}
\right), \;\;  a>1
\]

\paragraph{Transport cost.} Suppose that  there is a cost, $c_{ij} \in \mathbb{R}$, of moving a unit mass from source, $i$, to
target,  $j$.
Then, the total cost of transport is
$
\sum_{ (i,j) \notin  \mathcal{Z} } c_{ij}t_{ij} 
$
This gives rise to the {\em constrained OT problem:}
\begin{eqnarray}
&&\min _{T \in \mathcal{U}(\tilde{u}, \tilde{v})} \sum_{(i,j)\notin  \mathcal{Z} } c_{ij}t_{ij},  \label{eq:OptOrig}\\
% \quad \text{s.t. constraints \eqref{eq:rowconstraints} and \eqref{eq:colconstraints}}
% \quad  \sum_{j=1}^n t_{ij} = \tilde{u}_i, i=1,\dots m, \quad \sum_{i=1}^m t_{ij} = \tilde{v}_j,  j=1, \dots, n
%\quad \text{s.t.} \quad T {\bm 1}= \tilde{u},  \quad {\bm 1}^T T = \tilde{v} ,\quad t_{ij} \ge  0 \text{ and } t_{ij} = 0 \text{ for } (i,j) \in \mathcal{Z}
%\end{equation}
%where
%\begin{align}
\mathcal{U}(\tilde{u},\tilde{v}) &= &\left\{ T \in \mathcal{T} : \small{\text{constraints \eqref{eq:rowconstraints} and \eqref{eq:colconstraints}  hold}}\right\}  \label{eq:ScriptT}
\end{eqnarray}

\paragraph{An ideal transport plan and regularization}
Next, suppose that it is also desirable for $T$ to be close to some {\em ideal} (desired) transport plan, $\tilde{T} \in \mathcal{T}$ (\ref{eq:Tset}) with 
$\tilde{t}_{ij} >0$ for $(i,j) \notin \mathcal{Z}$.
%{blue}{$\tilde{T}$ is not the ideal transport plan (if we want to be consistent with the Zeheb paper: "ideal" has an established meaning in FPD, and that ideal is (11)).} 
%=\left\{\tilde{t}_{ij} \right\}$ which satisfies 
% $\tilde{T} {\bm 1}= \tilde{u}$, $ {\bm 1}^T\tilde{T}  = \tilde{v}$  (${\bm 1}$ is a vector of all ones) 
%constraints \eqref{eq:rowconstraints} and \eqref{eq:colconstraints} 
%with $\tilde{t}_{ij} = 0 $ for $(i,j) \in \mathcal{Z}$ and $\tilde{t}_{ij} >0$ for $(i,j) \notin \mathcal{Z}$.
%\begin{equation}
%\tilde{U} =  \left\{ (i,j): (i,j) \notin \mathcal{Z} \right\}
%\end{equation}
 % Hence $\sum_{i=1}^m \sum_{j=1}^n \tilde{t}_{ij} = \sum_{i=1}^m \sum_{j=1}^n t_{ij}$.
We therefore  modify 
%(i.e.\ regularize) 
the cost to
\begin{equation}
\label{eq:newJ}
\sum_{(i,j) \notin \mathcal{Z}} c _{ij} t_{ij} +\gamma_0 KL(T|\tilde{T})
\end{equation}
where  $\gamma_0 >0$ is a pre-assigned regularization constant, and the {\em Kullback-Leibler ($KL$) divergence}  \cite{BoufeljaAl2023} of $T$ from $\tilde{T}$ is defined as
\begin{align}
KL(T| \tilde{T}) \equiv   \sum_{(i,j) \notin \mathcal{Z}} kl(t_{ij}|\tilde{t}_{ij})	
\label{eq:KL}
\end{align}
where,  for any two scalar, $t\ge 0$ and $\tilde{t}>0$, we define
\[
kl(t|\tilde{t}) \equiv
\left\{\begin{array}{ccc}
\tilde{t}	&\text{if}	& t = 0	\\
\displaystyle{ t \log \left(\frac{t}{\tilde{t}}\right)} -t + \tilde{t}	& \text{if}	& t >0
\end{array}
\right.
\]
One may readily show that,  for all $t\ge0$ and $\tilde{t}>0$,  $kl(t|\tilde{t}) \ge 0$; also $kl(t|\tilde{t}) =0$ iff  $t=\tilde{t}$. Hence, for all $T\in \mathcal{T}$,  $KL(T|\tilde{T}) \ge 0$ and $KL(T|\tilde{T}) =0$ iff $T=\tilde{T}$.\newline

\begin{remark}{{Note again that original optimization problem  \eqref{eq:OptOrig} is a linear programming problem and may be computationally burdensome to solve. So, a common solution  approach 
is approximately to solve the  original  problem  \eqref{eq:OptOrig} by minimizing  the regularized cost  in   \eqref{eq:newJ} with $\gamma_0$ small but postive.
This problem can be solved more efficiently than the original problem.}}\end{remark}

Next, note that  
$
c_{ij}t_{ij}  	
=\gamma_0  t_{ij}\log\left(\frac{1}{\exp(-c _{ij}/\gamma_0)}\right)
$
and define 
{\small
\begin{align}
k_{ij} & \equiv \left\{
\begin{array}{ccl}
\tilde{t}_{ij}\exp(-c_{ij}/\gamma_0) \qquad & \text{if} \qquad & (i,j) \notin \mathcal{Z}	\\
0	\qquad & \text{if} \qquad & (i,j)  \in \mathcal{Z}
\end{array}
\right.	
\label{eq:K}
\end{align}
}
Then, whenever $t_{ij} \neq 0$,
\begin{align}
& c _{ij} t_{ij} +\gamma_0 kl(t_{ij}|\tilde{t}_{ij}) \nonumber \\
& 
= \gamma_0 t_{ij} \log\left( \frac{\tilde{t}_{ij}}{k_{ij}} \right)
+\gamma_0 t_{ij} \log\left(\frac{ t_{ij}}{\tilde{t}_{ij}} \right) - \gamma_0 t_{ij}
 +\gamma_0  \tilde{t}_{ij}	
\nonumber  \\
  &= \gamma_0 t_{ij} \log\left( \frac{t_{ij}}{k_{ij}} \right)
 - \gamma_0 t_{ij} +\gamma_0  k_{ij}
 +\gamma_0  \tilde{t}_{ij}
  -\gamma_0  k_{ij}
\nonumber  \\ 
  &= \gamma_0 kl(t_{ij}| k_{ij}) +\gamma_0  \tilde{t}_{ij}
  -\gamma_0  k_{ij}
  \label{eq:newkl}
\end{align}
\eqref{eq:newkl} also holds for  $t_{ij} =0$ , $(i,j) \notin \mathcal{Z}$.
Hence, cost \eqref{eq:newJ} can be expressed as 
$
\gamma_0 KL(T|K) +\sum_{(i,j) \notin \mathcal{Z}} \gamma_0 (\tilde{t}_{ij} - k_{ij})
$.
Therefore,  we consider the new optimization problem:
\begin{equation}
\label{eq:OptReg}
\min_{T \in {\mathcal U}(\tilde{u},\tilde{v}) }
%\sum_{t_{ij }\neq 0}  t_{ij} \log\left( \frac{t_{ij}}{k_{ij}} \right)
KL(T|K)
% \qquad \text{s.t. constraints \eqref{eq:rowconstraints} and \eqref{eq:colconstraints} }
%J(T)\quad \text{s.t.} \quad T {\bm 1}= \tilde{u},  \quad {\bm 1}^T T = \tilde{v} ,\quad t_{ij} >  0 \text{ for } (i,j) \notin \mathcal{Z}
%\text{ and } t_{ij} = 0 \text{ for } (i,j) \in \mathcal{Z}
\end{equation}
where $\mathcal{U}(\tilde{u},\tilde{v})$ is given by \eqref{eq:ScriptT}. 
%where
%\begin{equation}
%\mathcal{T}_o = \left\{ T \in \mathbb{R}^{m\times n} : t_{ij} > 0 \text{ for } (i,j) \notin \mathcal{Z} \text { and } t_{ij} = 0 \text{ for } (i,j) \in \mathcal{Z} \right\}
%\end{equation}
%Note that, in this problem we consider $t_{ij} >0$ for $(i,j) \notin \mathcal{Z}$ rather than $t_{ij} \ge 0$ as was considered in the last problem.
When the optimization problem \eqref{eq:OptReg} is feasible (i.e.\ $\mathcal{U}(\tilde{u},\tilde{v})$ is non-empty),
its solution can be obtained using the Sinkhorn-Knopp (SK) Algorithm \cite{doi:10.1137/060659624}.
\newline

{\em Sinkhorn-Knopp (SK) Algorithm.}
Initialize
$
  d_{2j}(0) = 1
$. Iterate for $l= 0,1, \dots$
\begin{align}
d_{1i}(l+1)  &=  \frac{\tilde{u}_i}{\Sigma_{j=1}^n k_{ij}d_{2j}(l)}	
	\\
d_{2j}(l+1) &= \frac{\tilde{v}_j}{\Sigma_{i=1}^m d_{1i}(l+1)k_{ij}}	\\
t_{ij}(l+1) &= d_{1i}(l+1)k_{ij}d_{2j}(l+1) 
\end{align}
$\protect{\hspace*{7.75cm}} \square $

% \begin{algorithm}
% \caption{Modified Sinkhorn algorithm}\label{alg:cap}
% \begin{algorithmic}[2]
% \Require $\alpha \in \mathbbm{R}^{1 \times m}$: the row marginal, $\beta \in \mathbbm{R}^{1 \times n}$: the column marginal, $K \in \mathbbm{R}^{m \times n}\geq 0$: the kernel matrix, $\gamma_{0} \geq 0$: the regularisation parameter.
% \State \boldsymbol{P} \gets K$
% \State $\tilde{\beta} \gets \beta$
% \While{not converged}
% \State $c_{1} \gets \frac{\alpha}{\boldsymbol{P}\mathbbm{1}_{n}} $
% \State $c_{2} \gets (\frac{\tilde{\beta}}{c_{1}P})^{\frac{\gamma_{0}}{1 + \gamma_{0}}}$
% \State $P \gets diag(c_{1}) P diag(c_{2})$
% \State $\tilde{\beta} \gets c_{2}^{-1/ \gamma_{0}} \odot \tilde{\beta}$
% \EndWhile
% \end{algorithmic}
% \end{algorithm}

If \eqref{eq:OptReg} is feasible,   the sequence, $\{T(l)\}$ converges to a limit, $T^*$, which 
is the unique minimizer for \eqref{eq:OptReg} \cite{BaradatAl2022}.
 If  $t^*_{ij} = 0$ for some $(i,j) \notin \mathcal{Z}$,  then either
$\{d_{ii}(l)\}$ or $\{d_{2j}(l)\}$ do not converge.
If \eqref{eq:OptReg} is not feasible,
   the sequence, $\{T(l)\}$ has two convergent subsequences with different limits \cite{BaradatAl2022}.
 %\section{When desired marginals constraints are not achievable}
\newline

\paragraph{When desired marginals constraints are not achievable}
If $\mathcal{Z}$ is non-empty,   it may not be possible to simultaneously  satisfy the marginal constraints,  \eqref{eq:rowconstraints} and \eqref{eq:colconstraints}.
If---as already imposed in (\ref{eq:Tset})---$T$ has no enforced rows of zeroes,   the row constraints can always be achieved, as explained in Remark \ref{rem:feasible}, below. Therefore, consider the situation in which row constraints \eqref{eq:rowconstraints} are satisfied but 
column constraints \eqref{eq:colconstraints}  are not necessarily satisfied.
Instead of requiring satisfaction of the column constraints, we add a KL-based penalty term to the cost function, which is a measure of non-compliance with the column constraints, $\tilde{v}$.
We therefore consider a new optimization problem given by
\begin{equation}
   \min_{ T \in \mathcal{V}(\tilde{u})} KL(T|K)
+\gamma KL(v_T|\tilde{v})
\label{eq:GenOpt}
\end{equation} 
where $KL(v_T|\tilde{v}) =\sum_{j=1}^n kl(v_{T_j}| \tilde{v}_j)$, 
%\begin{align}
$v_{T_j}  {\equiv} \sum_{i=1}^m  t_{ij}$, $\gamma >0$,
%\quad 
%\end{align}
 %$
  %$, 
% \end{align}
%and
\begin{align}
 \mathcal{V}(\tilde{u}) &\equiv \left\{ T\in \mathcal{T} : \sum_{j=1}^n t_{ij} = \tilde{u}_i, \; i=1, \dots, m \right\}.
 \label{eq:Vu}
\end{align}
%and
%$\gamma >0$.\newline

\begin{remark}
\label{rem:feasible}
For any admissible $\tilde{u}$,  one can always obtain a $T$ in $\mathcal{V}(\tilde{u})$ with $t_{ij} > 0$ for all $(i,j) \notin \mathcal{Z}$. Simply choose any $T\in \mathcal{T}$ 
with  $t_{ij} > 0$ for all $(i,j) \notin \mathcal{Z}$, and scale each of its rows to yield a new matrix in $\mathcal{V}(\tilde{u})$.
Therefore, $\mathcal{V}(\tilde{u})$ is non-empty, and since we are minimizing a continuous strictly convex function (\ref{eq:GenOpt}) over a compact convex set (\ref{eq:Vu}),
a unique minimizer exists.
\end{remark}
$\protect{\hspace*{7.75cm}} \square $
%$\protect{\hspace*{7.75cm}} \square $

\begin{remark}{{\cite{RinghAl2022} considers a generalization of the problem considered here. To highlight the novelty of our work we make the following comments.}}\newline
\begin{itemize}
\item[(a)] 
{It is true that our problem  is a special case of the {\bf initial}  general problem considered in  \cite{RinghAl2022} by, among other things,  specializing to the bivariate (so, bi-marginal) case, and taking $c_{ij} =
\infty$ for all entries in the transport plan that are to  be zero.}\newline

\item[(b)] {
%However, our {particular algorithms are} not presented in \cite{RinghAl2022}. While, it might, indeed, be the case  that {our algorithms  may  be derivable}  using the technical tools therein, 
However,
our algorithms---which have been designed, among other things,  to be relevant  in important applications of OT, such as the sharing economy application of our Section~VII---are {\em not\/}  presented in \cite{RinghAl2022}. }\newline

\item[(c)] {More substantially, we note the following difference. In order to prove convergence of their  algorithm to solve their general problem, it is assumed that all the elements of their cost {tensor}  $\mathbf C$ are finite (Assumption C {on page 5, left column,  of \cite{RinghAl2022}}). Under this assumption, our problem is not a special case of the  problem solved by the algorithm in \cite{RinghAl2022}, because for our problem to be a special case, some of the elements of their cost tensor $\mathbf C$ must be infinite. This assumption is used in the proof of Lemma III.8  in \cite{RinghAl2022}. {Note also  that prior zeros are {\em not\/} imposed on their transport tensor, $M$ (corresponding to our transport matrix, $T$).} \newline}
\end{itemize}
\end{remark}

\section{Main result: a modified SK  algorithm}
\label{sec:algore's_favorite_dance}
{We now present the main results of the paper. These results yield iterative scaling algorithms that produce a sequence, $\{T(l\}$, converging to the optimal transport plan, $T^*$, for 
\eqref{eq:GenOpt}. 
In particular, two algorithms are presented. Algorithm 1 is obtained using  results in  \cite{Bregman1967}. Algorithm 2 is equivalent to Algorithm I and is presented for comparison to other related algorithms such as the SK Algorithm. 
A third algorithm, the Chizat Algorithm  is a related algorithm from the literature that is included to provide context for our contributions.}
\newline

%\paragraph{Modified Sinkhorn-Knopp algorithm}
{\bf Algorithm 1.}
Initialize
$
T(0)= K, v(0) = \tilde{v}
$.
Iterate for $l=0,1, \dots$
\begin{align}
c_{1i}(l+1) 	&=  \frac{\tilde{u}_i}{\Sigma_{j=1}^n t_{ij}(l)} 	\label{eq:c_1Iterate}	\\
%\qquad i=1,\dots,m	
%\nonumber\\
c_{2j}(l+1)  &= \left(\frac{v_j(l)}{\Sigma_{i=1}^m c_{1i}(l+1)t_{ij}(l)}\right)^{\frac{\gamma }{1+\gamma}}	
\label{eq:c_2Iterate}\\
%\nonumber \\
  t_{ij}(l+1)   &= c_{1i}(l+1)t_{ij}(l)c_{2j}(l+1)	
 \label{eq:t_ijIterate}  \\
  v_{j}(l+1) &= c_{2j}(l+1)^{-1/\gamma} v_j(l)	
  \label{eq:v_jIterate} 
 \end{align}
$\protect{\hspace*{7.75cm}} \square $

The following theorem provides the main result of this paper. A proof is provided in Section \ref{sec:Proof}.\newline

\begin{theorem}
\label{th:MSKA}
Consider the  sequence, $\{T(l),v(l)\}$, generated by  
%Modified Sinkhorn-Knopp Algorithm.
Algorithm 1.
% with $t_{ij}(0) >0$  for $(i,j) \notin \mathcal{Z}$ an $t_{ij}(0) = 0$ for $(i,j) \in \mathcal{Z}$ and $v_j(0) > 0 $ for all $j$.
This sequence converges to the (unique) limit, $(T^*, v^*)$, 
%which satisfies constraints    \eqref{eq:rowconstraints3}   and $T^* \in \mathcal{T}$, $v^* \in \mathbb{R}^n_+$.
%If  $\eqref{eq:GenOpt} and
and $T^*$ is the minimizer for  the optimization problem   given by \eqref{eq:GenOpt}.\newline
\end{theorem}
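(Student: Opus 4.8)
The plan is to realize problem \eqref{eq:GenOpt} as a Bregman projection and to recognize Algorithm~1 as the corresponding cyclic Bregman projection scheme, whose convergence then follows from \cite{Bregman1967}. First I would lift the problem to the augmented space of pairs $(T,v)$, working only with the coordinates $(i,j)\notin\mathcal{Z}$ of $T$ (those in $\mathcal{Z}$ remain frozen at $0$, consistently with $k_{ij}=0$). Let $\phi(T,v)=\sum_{(i,j)\notin\mathcal{Z}}\big(t_{ij}\log t_{ij}-t_{ij}\big)+\gamma\sum_{j=1}^{n}\big(v_j\log v_j-v_j\big)$, a strictly convex, essentially smooth (Legendre-type) kernel with $\mathrm{dom}\,\phi$ equal to the nonnegative orthant; its induced Bregman distance is $D_\phi\big((T,v),(T',v')\big)=KL(T|T')+\gamma\,KL(v|v')$. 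Then \eqref{eq:GenOpt} is exactly the computation of the $D_\phi$-projection of $(K,\tilde{v})$ onto $\mathcal{C}_1\cap\mathcal{C}_2$, where $\mathcal{C}_1\equiv\{(T,v):T{\bm 1}=\tilde{u}\}$ carries the hard row constraints and $\mathcal{C}_2\equiv\{(T,v):v=v_T\}$ couples $v$ to the column sums of $T$; indeed, substituting $v=v_T$ on $\mathcal{C}_2$ turns $D_\phi\big((T,v),(K,\tilde{v})\big)$ into $KL(T|K)+\gamma KL(v_T|\tilde{v})$, while the constraints $t_{ij}\ge 0$ need not be imposed explicitly because they are built into $\mathrm{dom}\,\phi$. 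By Remark~\ref{rem:feasible} there is $T\in\mathcal{V}(\tilde{u})$ with $t_{ij}>0$ for all $(i,j)\notin\mathcal{Z}$; pairing it with $v=v_T>0$ exhibits a point of $\mathcal{C}_1\cap\mathcal{C}_2$ in the relative interior of $\mathrm{dom}\,\phi$, so the projection exists, and strict convexity of $\phi$ makes it the unique minimizer $(T^*,v^*)$ of \eqref{eq:GenOpt}.

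Next I would verify that one sweep of Algorithm~1 is a $D_\phi$-projection onto $\mathcal{C}_1$ followed by a $D_\phi$-projection onto $\mathcal{C}_2$. The Lagrange conditions for projecting $(T(l),v(l))$ onto $\mathcal{C}_1$ give the pure row rescaling $t_{ij}\mapsto t_{ij}\tilde{u}_i/\sum_{j}t_{ij}$ with $v$ unchanged, which is \eqref{eq:c_1Iterate}. Projecting the resulting $(\hat T,v(l))$ onto $\mathcal{C}_2$ means minimizing $KL(T|\hat T)+\gamma KL(v|v(l))$ subject to $v=v_T$; its Lagrange conditions give the multipliers $e^{\lambda_j}=\big(S_j/v_j(l)\big)^{\gamma/(1+\gamma)}$ with $S_j\equiv\sum_i\hat t_{ij}$, hence $t_{ij}\mapsto\hat t_{ij}\,(v_j(l)/S_j)^{\gamma/(1+\gamma)}$ and $v_j\mapsto v_j(l)\,(S_j/v_j(l))^{1/(1+\gamma)}$, which are precisely \eqref{eq:c_2Iterate}--\eqref{eq:v_jIterate}. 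As $\mathcal{C}_1$ is a product of hyperplanes on the disjoint coordinate blocks $\{t_{i,\cdot}\}$ and $\mathcal{C}_2$ a product of hyperplanes on the disjoint blocks $\{(t_{\cdot,j},v_j)\}$, this is a cyclic Bregman projection over $m+n$ hyperplanes, and the Dykstra-type corrections reduce to the plain rescalings, as in the classical coincidence of Dykstra's algorithm with iterative proportional fitting for marginal constraints (re-enforcing row sums after a column rescaling returns the unique diagonal scaling with those row sums, so the $\mathcal{C}_1$-correction is vacuous, while the only memory needed for $\mathcal{C}_2$ is carried by $v(l)$). With $T(0)=K$, $v(0)=\tilde{v}$, every iterate has the product form $t_{ij}(l)=a^{(l)}_i k_{ij} b^{(l)}_j$ and, by \eqref{eq:t_ijIterate}--\eqref{eq:v_jIterate}, $b^{(l)}_j=(\tilde{v}_j/v_j(l))^{\gamma}$. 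This is exactly what is needed for the Bregman optimality characterization at a feasible limit point $(T^*,v^*)$: $\nabla\phi(T^*,v^*)-\nabla\phi(K,\tilde{v})=\big(\log(t^*_{ij}/k_{ij}),\ \gamma\log(v^*_j/\tilde{v}_j)\big)$ pairs to $0$ with every $(M,w)$ in the tangent space $\{M{\bm 1}=0,\ w=M'{\bm 1}\}$ of $\mathcal{C}_1\cap\mathcal{C}_2$, so any feasible limit point of the iteration is automatically the projection.

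It then remains to run the convergence argument: the iterates remain in a compact subset of $\mathrm{dom}\,\phi$ (the row sums of $T(l)$ are pinned to $\tilde{u}$ after \eqref{eq:c_1Iterate}, and $v(l)$ is controlled through $b^{(l)}$), and by the Bregman Pythagorean inequality each projection does not increase the distance $D_\phi$ to any point of $\mathcal{C}_1\cap\mathcal{C}_2$, in particular to $(T^*,v^*)$; hence any limit point lies in $\mathcal{C}_1\cap\mathcal{C}_2$, must equal the unique projection by the previous paragraph, and the whole sequence converges to $(T^*,v^*)$. This is the content of Bregman's convergence theorem \cite{Bregman1967}, once its hypotheses (Legendre kernel, closed constraint sets, nonempty interior intersection) have been checked. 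I expect the main obstacle to be the convergence of the \emph{iterates themselves}, not merely of the objective value: as with the plain SK algorithm, if $t^*_{ij}=0$ for some $(i,j)\notin\mathcal{Z}$ the scaling factors $c_{1i}(l)$, $c_{2j}(l)$ need not converge, so the argument must be made at the level of the products $T(l)$ and the auxiliary sequence $v(l)$, and the step showing that every limit point is $\mathcal{C}_1\cap\mathcal{C}_2$-feasible (i.e.\ $v(l)-v_{T(l)}\to 0$ and the row sums stay exact) requires care. A minor point is zone consistency: the iterates stay in $\mathrm{dom}\,\phi$ because $k_{ij}>0$ for $(i,j)\notin\mathcal{Z}$ (as $\tilde{t}_{ij}>0$ and $e^{-c_{ij}/\gamma_0}>0$) and every update multiplies entries by strictly positive factors.
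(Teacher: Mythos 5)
Your proposal is correct and follows essentially the same route as the paper: lift to the augmented variable $(T,v)$, recast \eqref{eq:GenOpt} as a KL/Bregman projection of $(K,\tilde v)$ onto the intersection of the two affine sets $\{T{\bm 1}=\tilde u\}$ and $\{T'{\bm 1}=v\}$, verify via the Lagrangian computations (your two projection formulas are exactly the paper's Lemmas~\ref{lem:lem2} and~\ref{lem:lem3}) that Algorithm~1 is the corresponding alternating KL-projection scheme, and invoke Bregman's convergence theorem together with the gradient/range condition at the initial point (which the paper checks as $\nabla f(x(0))=0$, and which you carry equivalently as the invariant $b_j^{(l)}=(\tilde v_j/v_j(l))^{\gamma}$). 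The only cosmetic difference is that the paper absorbs the weight $\gamma$ by writing $x=\mathrm{vec}(T,\gamma v)$ so the objective is a single unweighted KL divergence, whereas you keep a weighted entropy kernel $\phi$; these are equivalent.
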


\begin{remark}
Since 
\begin{align}
\label{eq:row*constraint}
    \Sigma_{j=1}^n t^*_{ij} =\tilde{u}_i
\end{align}
 from  \eqref{eq:c_1Iterate},
$
\lim_{l\rightarrow \infty}c_{1i}(l) = 1
$
  for all $i$.
  In Lemma \ref{lem:lem1b} (see Section~\ref{sec:propmin}), it will be shown that $t^*_{ij} >0$ for all $(i,j) \notin \mathcal{Z}$.
Hence, $T^*$ is guaranteed  not to contain any row of zeroes, and it follows from \eqref{eq:t_ijIterate} that
$
     \lim_{l\rightarrow \infty}c_{2j}(l) = 1
$
  for all $j$.
  From \eqref{eq:c_2Iterate}, we see that
  \begin{align}
  \label{eq:col*constraint}
   \Sigma_{i=1}^m t^*_{ij}  = v^*_j
  \end{align}
  and, since $T^*$ does not contain any column of zeroes, we must have
 % \begin{align}
   $   v^*_j >0$,
  %\end{align}
   $\forall j$.
It now follows, from \eqref{eq:row*constraint} and \eqref{eq:col*constraint},
that
$
\sum_{j=1}^n v_j^* = \sum_{i=1}^m \tilde{u}_i
$
and so $v^*$ and $\tilde{u}$ are {\em balanced } (i.e. mass-conserving).  
The above result holds even in the {\em unbalanced} (i.e.\ non-mass-conserving) problem,
%~\cite{ChizatAl2018},   in which
$
\sum_{i=1}^m \tilde{u}_i \neq \sum_{j=1}^n \tilde{v}_j
\label{eq:nonmc}
$.
{In this case, application of the SK algorithm
produces a
 sequence, $\{T(l)\}$, which  has two convergent subsequences with different limits \cite{BaradatAl2022}.}
\end{remark}
$\protect{\hspace*{7.75cm}} \square $

{In order to obtain an algorithm for comparison with the SK algorithm,} we  introduce scaling parameters, $d_{1i}(l)$ and $d_{2j}(l)$, defined by
\begin{align}
d_{1i}(l+1) &\equiv c_{1i}(l+1)d_{1i}(l), \qquad d_{1i}(1) \equiv c_{i1}(1)	\\
d_{2j}(l+1) &\equiv c_{2j}(l+1)d_{2j}(l), \qquad d_{2j}(0) \equiv 1
\end{align}
Then
\begin{align}
t_{ij}(l+1)   &= d_{1i}(l+1)k_{ij}d_{2j}(l+1)	
 \label{eq:t_ijIterate2}  \\
  v_{j}(l+1) &= d_{2j}(l+1)^{-1/\gamma} \tilde{v}_j	
  \label{eq:v_jIterate2} 
\end{align}
%\protect{\nopagebreak}
The  sequence, $\{T(l)\}$, obtained from Algorithm~1,  can then also be obtained from the following algorithm.
\newline

{\bf Algorithm 2.}
Initialize
$
  d_{2j}(0) = 1.
$
Iterate for $l= 0,1, \dots$
\begin{align}
d_{1i}(l+1)  &=  \frac{\tilde{u}_i}{\Sigma_{j=1}^n k_{ij}d_{2j}(l)}			\label{eq:d1Iterate}
	\\
d_{2j}(l+1) &= \left(\frac{\tilde{v}_j}{\Sigma_{i=1}^m d_{1i}(l+1)k_{ij}}\right)^\frac{\gamma}{1+\gamma}	\label{eq:d2Iterate}\\
t_{ij}(l+1) &= d_{1i}(l+1)k_{ij}d_{2j}(l+1)
\label{eq:t_ijIterate3}
\end{align}
$\protect{\hspace*{7.75cm}} \square $

\begin{remark}
\label{rem:d}
Since $\tilde{v}_j, v^*_j> 0$ for all $j$, it follows from \eqref{eq:v_jIterate2}
that the sequence $\{d_{2j}(l)\}$ has a limit $d^*_{2j}$ which is non-zero
%\[
%\lim_{l\rightarrow  \infty} d_{2j}(l) = d^*_{2j} \equiv \left(\frac{\tilde{v}_j}{v^*_j}\right)^{\gamma}
%=\left(\frac{\tilde{v}_j} {\Sigma_{i=1}^m t^*_{ij}}\right)^{\gamma}
%\]
for all $j$. Now
\eqref{eq:t_ijIterate2} implies that
that the sequence $\{d_{1i}(l)\}$ has a limit $d^*_{1i}$
%\[
%\lim_{l\rightarrow  \infty} d_{1i}(l) = d^*_{1i} \equiv \frac{t^*_{ij}}{k_{ij}d^*_{2j}}
%\]
for all $i$.
Moreover \eqref{eq:d1Iterate} and \eqref{eq:d2Iterate} show that these limits satisfy
\begin{equation}
 d^*_{1i} =\frac{ \tilde{u}_i} {\sum_{j=1}^n  k_{ij} d^*_{2j}}
\qquad  d^*_{2j} = \left(\frac{ \tilde{v}_j}{\sum_{i=1}^m  d^*_{1i} k_{ij}}\right)^{\frac{\gamma}{1+\gamma}}
\label{eq:dLimits}
\end{equation}
and \eqref{eq:t_ijIterate3} results in
\begin{equation}
\label{eq:tLimit}
t^*_{ij} = d^*_{1i}k_{ij}d^*_{2j}
\end{equation}

\end{remark}
$\protect{\hspace*{7.75cm}} \square $

\begin{remark}
If one considers the limit, as $\gamma \rightarrow \infty$, in Algorithm~2,  one obtains the SK algorithm \cite{doi:10.1137/060659624}, whose  sequence, $\{T(l)\}$,
converges to the minimizer for optimization problem \eqref{eq:OptReg}, 
{\em provided this problem is feasible.}
%\newline
\end{remark}
$\protect{\hspace*{7.75cm}} \square $

\begin{remark}
{Before proceeding we note that \cite{ChizatAl2018} considers a problem which is related (but different from) to the problem considered here. Namely, to solve}
{
\begin{equation}
\label{eq:ChizatProb}
\min_{ T \in \mathcal{T}} KL(T|K)
+\gamma_1 KL(u_T|\tilde{u}) +\gamma_2 KL(v_T|\tilde{v})  
\end{equation}
where
$
u_{T_i} = \sum_{j=1}^n  t_{ij}$ for all $i$,
%\qquad \text{and }\quad 
% KL(u^T|\tilde{u}) =\sum_{u^T_i \neq 0} u_i^T\log\left( \frac{u^T_i}{\tilde{u}_i}\right) - u_i^T + \sum_{i=1}^m \tilde{u}_i
 $
k_{ij}  =
\exp(-c_{ij}/\gamma_0) \text{ for all }  (i,j)
$
and  $\gamma_1, \gamma_2 >0$.
In \cite{ChizatAl2018}, neither  marginal constraint is enforced and  
none of the elements of $T$ are constrained to be zero.
%{\em every\/}  element, $k_{ij}$, of $K$ is (strictly) positive. 
This is in contrast to the current paper, in which the  source  marginal is constrained to be $\tilde{u}$, and some of the elements of $T$ 
%are (exactly) zero, specifically
are enforced to be zero, specifically
$t_{ij} = 0$ for $(i,j) \in \mathcal{Z}$. 
\cite{ChizatAl2018} shows that the sequence, $\{T(l)\}$, generated by the following algorithm converges to a limit which is the minimizer for optimization
problem \eqref{eq:ChizatProb}.}\newline
\end{remark}

{{\em {\bf Chizat Algorithm.}}}
Initialize
$
  d_{2j}(0) = 1.
$
Iterate for $l= 0,1, \dots$
\begin{align}
d_{1i}(l+1)  &=  \left(\frac{\tilde{u}_i}{\Sigma_{j=1}^n k_{ij}d_{2j}(l)}\right)^\frac{\gamma_1}{1+\gamma_1}
	\\
d_{2j}(l+1) &= \left(\frac{\tilde{v}_j}{\Sigma_{i=1}^m d_{1i}(l+1)k_{ij}}\right)^\frac{\gamma_2}{1+\gamma_2}\\
t_{ij}(l+1) &= d_{1i}(l+1)k_{ij}d_{2j}(l+1)
\end{align}
$\protect{\hspace*{7.75cm}} \square $

\begin{remark}
{Remarkably, if  one considers the limit as $\gamma_1 \rightarrow \infty$, 
 then the Chizat algorithm reduces to our Algorithm 2 in the case where there are no constraints on $T$ and all the elements
of $\tilde{T}$ equal one.}
\end{remark}

\section{Properties  of the minimizer, $T^*$ }
\label{sec:propmin}
%The following results can be obtained from Theorem \ref{th:MSKA} and Remark \ref{rem:d}.
%However, the following proofs are  simpler and independent of any algorithm.
\begin{lemma}
\label{lem:lem1}
If 
$T^*$ is a minimizer for   \eqref{eq:GenOpt}, then $t^*_{ij} >0 $ for all $(i,j)  \notin \mathcal{Z}$.
% $T \in \mathcal{T}$ satisfying constraints \eqref{eq:rowconstraints} and \eqref{eq:colconstraints}.
\end{lemma}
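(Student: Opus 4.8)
The plan is to prove Lemma \ref{lem:lem1} by contradiction, exploiting the structure of the $KL$ divergence and, in particular, the fact that $kl(t|\tilde t)$ has an infinite one-sided derivative at $t=0$. Suppose $T^*$ minimizes \eqref{eq:GenOpt} but $t^*_{pq}=0$ for some $(p,q)\notin\mathcal{Z}$. The key observation is that, since $T^*\in\mathcal{V}(\tilde u)$ has no enforced zero rows (by our standing assumption on $\mathcal{Z}$) and satisfies the row constraint $\sum_j t^*_{pj}=\tilde u_p>0$, there must exist some column index $r$ with $(p,r)\notin\mathcal{Z}$ and $t^*_{pr}>0$. The idea is then to perturb $T^*$ within $\mathcal{V}(\tilde u)$ by shifting a small mass $\varepsilon>0$ from entry $(p,r)$ to entry $(p,q)$: set $T_\varepsilon$ equal to $T^*$ except that $t_{\varepsilon,pq}=\varepsilon$ and $t_{\varepsilon,pr}=t^*_{pr}-\varepsilon$. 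For $\varepsilon$ small this stays in $\mathcal{T}$ (nonnegativity and the prior zeros are respected since $(p,q),(p,r)\notin\mathcal{Z}$), and the row sum of row $p$ is unchanged, so $T_\varepsilon\in\mathcal{V}(\tilde u)$.

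Next I would compute the change in the objective $F(T)\equiv KL(T|K)+\gamma KL(v_T|\tilde v)$ as a function of $\varepsilon$. Only row $p$ changes, and in the column marginals only $v_q$ and $v_r$ change, each by $\pm\varepsilon$. Writing out $F(T_\varepsilon)-F(T^*)$, the dominant term as $\varepsilon\to 0^+$ comes from $kl(t_{\varepsilon,pq}|k_{pq}) = kl(\varepsilon|k_{pq}) = \varepsilon\log(\varepsilon/k_{pq})-\varepsilon+k_{pq}$ (recall $k_{pq}>0$ since $(p,q)\notin\mathcal{Z}$ and $\tilde t_{pq}>0$), whose derivative with respect to $\varepsilon$ behaves like $\log\varepsilon\to-\infty$. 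All the other affected terms — the change in $kl(t_{pr}|k_{pr})$, and the changes in $kl(v_q|\tilde v_q)$ and $kl(v_r|\tilde v_r)$ — are differentiable at $\varepsilon=0$ (here one uses $t^*_{pr}>0$ and that $v^*_r\ge t^*_{pr}>0$; the column marginal $v_q$ may be zero, but $kl(\cdot|\tilde v_q)$ contributes $+\varepsilon\log(\varepsilon/\tilde v_q)$ which only makes the derivative \emph{more} negative, so it does not spoil the argument). Hence $\frac{d}{d\varepsilon}F(T_\varepsilon)\big|_{\varepsilon\to 0^+}=-\infty$, so for sufficiently small $\varepsilon>0$ we get $F(T_\varepsilon)<F(T^*)$, contradicting minimality of $T^*$.

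Concretely, the cleanest way to organize this is: (i) fix $(p,q)\notin\mathcal{Z}$ with $t^*_{pq}=0$ and locate $r$ with $(p,r)\notin\mathcal{Z}$, $t^*_{pr}>0$; (ii) define $T_\varepsilon$ and verify $T_\varepsilon\in\mathcal{V}(\tilde u)$ for $0\le\varepsilon\le t^*_{pr}$; (iii) expand $g(\varepsilon)\equiv F(T_\varepsilon)$ and identify the $\varepsilon\log\varepsilon$ term with negative coefficient (coefficient $1$) and bound the remaining terms by $C\varepsilon$ for a constant $C$; (iv) conclude $g(\varepsilon)-g(0)\le \varepsilon\log\varepsilon - \varepsilon\log k_{pq} + C\varepsilon < 0$ for small $\varepsilon$, a contradiction.

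The main obstacle I anticipate is the bookkeeping of the column-marginal penalty term in the case $v^*_q=0$: one must check that $kl(v_q|\tilde v_q)$ evaluated at $v_q=\varepsilon$, namely $\varepsilon\log(\varepsilon/\tilde v_q)-\varepsilon+\tilde v_q$ minus its value $\tilde v_q$ at $\varepsilon=0$, contributes $\varepsilon\log\varepsilon$ with the \emph{same} (negative) sign rather than cancelling the gain — so it strengthens rather than weakens the contradiction — while being careful that the $kl$ definition is genuinely being applied at $0$ there. Everything else is a routine Taylor expansion, using strict positivity of $k_{pq}$, $k_{pr}$, $t^*_{pr}$, $\tilde v_q$, $\tilde v_r$ to ensure smoothness of the non-dominant terms at $\varepsilon=0$. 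A secondary point worth stating explicitly is why such an $r$ exists: $\sum_{j:(p,j)\notin\mathcal{Z}} t^*_{pj}=\tilde u_p>0$ forces at least one positive summand, and $t^*_{pq}=0$ means it is not $j=q$.
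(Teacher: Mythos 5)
Your proof is correct, and it takes a genuinely different route from the paper's. The paper argues globally: it takes a feasible $\hat{T}\in\mathcal{V}(\tilde u)$ with all free entries strictly positive (Remark~\ref{rem:feasible}), moves along the segment $(1-\lambda)T^*+\lambda\hat T$, and applies the mean value theorem to show that the directional derivative blows up to $-\infty$ whenever some $t^*_{ij}=0$, because $\partial f/\partial t_{ij}$ contains $\log(\underline t_{ij}/k_{ij})\to-\infty$. You instead perform a purely local perturbation: a two-entry mass shift of size $\varepsilon$ within a single row, from a positive entry $(p,r)$ (which exists because the row sum equals $\tilde u_p>0$ and entries on $\mathcal{Z}$ vanish) to the offending zero entry $(p,q)$. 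Both arguments hinge on the same singularity of $t\mapsto t\log t$ at $t=0$, but yours replaces the mean value theorem and the global bounds $\beta_1,\beta_2,\beta_3$ with a one-variable expansion $g(\varepsilon)-g(0)=\varepsilon\log\varepsilon+O(\varepsilon)<0$, and it does not need the existence of a globally positive feasible point — only the row constraint. You also correctly handle the only delicate case, $v^*_q=0$, noting that the column-penalty term then contributes an additional $\gamma\,\varepsilon\log\varepsilon$ of the same sign, reinforcing rather than cancelling the decrease. The trade-off is that your argument exploits the specific structure of $\mathcal{V}(\tilde u)$ (row-sum constraints only, so a within-row shift stays feasible), whereas the paper's segment argument is the standard template that transfers verbatim to any convex feasible set admitting a strictly positive point.
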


\begin{proof}
Suppose that $T^*$ is a minimizer for   \eqref{eq:GenOpt}.
% and  $t^*_{ij} = 0$ for some $(i,j) \notin \mathcal{Z}$.
From Remark \ref{rem:feasible},  there is a $\hat{T} \in \mathcal{V}(\tilde{u})$
with $\hat{t}_{ij} >0$ for all $(i,j) \notin \mathcal{Z}$.
Since  $\mathcal{V}(\tilde{u})$ is convex,  $(1-\lambda)T^* + \lambda \hat{T}$ is in  $\mathcal{V}(\tilde{u})$ for all  $\lambda \in [0, 1]$.
Also,
% for some $\bar{\lambda} \in [0,1]$, 
 there are bounds,   $\beta_1$ and $\beta_2$, such that, if  $t^*_{ij} >0$ then  for any $(i,j) \notin \mathcal{Z}$,
$
0 < \beta_1 \le t_{ij} \le \beta_2 
$
for $T =(1-\lambda)T^* + \lambda \hat{T} \text{  and } \lambda \in [0, 1]$.
The function to be minimized in \eqref{eq:GenOpt}  can be expressed as
\[
f(T) =
\sum_{(i,j) \notin {\mathcal Z}}   kl(t_{ij} | k_{ij}) 
 +\gamma \sum_{j=1}^n kl \left(\sum_{l=1}^m t_{lj} \big| \tilde{v}_j \right)	
\]

Consider any  $\lambda \in (0, 1]$ and $T=(1-\lambda)T^* + \lambda \hat{T}$.
By the mean value theorem, $\exists$  $\underline{\lambda} \in (0, \lambda)$ such that
\begin{align}
\label{eq:f(T)}
f(T)
= f(T^*) +  \lambda\!\sum_{(i,j) \notin {\mathcal Z}}   \frac{\partial f}{\partial t_{ij}}(\underline{T}) (\hat{t}_{ij} - t^*_{ij})
\end{align}
where $\underline{T} = (1-\underline{\lambda})T^* + \underline{\lambda} \hat{T}$ and
\beq
 \frac{\partial f}{\partial t_{ij}}(\underline{T}) = \log \left( \frac{\underline{t}_{ij}}{k_{ij} }  \right) 
 + \gamma   \log \left(\frac{\sum_{l=1}^m \underline{t}_{lj}}{\tilde{v}_j}\right)
 \eeq
 
If  $t^*_{ij} >0$ then, for all $\lambda \in (0, 1]$,  we have  $0 < \beta_1 \le \underline{t}_{ij}\le \beta_2$,
and $0 < \beta_1 \le \sum_{l=1}^m \underline{t}_{lj} \le \beta_3$ for some $\beta_3$.
Hence
$\frac{\partial f}{\partial t_{ij}}(\underline{T})(\hat{t}_{ij}- t^*_{ij}) \le \gamma_{ij}
$
  for some $\gamma_{ij}$.
Suppose that  $t^*_{ij} =0$   for some $(i,j) \notin \mathcal{Z}$. Then $\lim_{\lambda \rightarrow 0} \underline{t}_{ij} = t^*_{ij} = 0$; and
$
\lim_{\lambda \rightarrow 0} \frac{\partial f}{\partial t_{ij}}(\underline{T})(\hat{t}_{ij}- t^*_{ij}) = -\infty
$.
This implies  that, for $\lambda >0$ sufficiently small,
$
\sum_{(i,j) \notin \mathcal{Z}}  \frac{\partial f}{\partial t_{ij}}(\underline{T}) (\hat{t}_{ij} - t^*_{ij}) <0
$
which along with \eqref{eq:f(T)} yields the contradiction,
$
f(T)  < f(T^*).
$

\end{proof}

The following result can be obtained from the discussion of Algorithm 2; see Remark \ref{rem:d}.
However we wish to provide a proof which is independent of any algorithm.\newline

\begin{lemma}
\label{lem:lem1b}
A matrix, $T^*$,   solves the OT problem given by \eqref{eq:GenOpt}  iff
$\exists$   positive scalars, $d_{11}, \dots,  d_{1m}$ and $d_{21},\dots, d_{2n}$,  such that, 
for all $(i, j)$
% \notin \mathcal{Z}$,
\ignore{
\begin{equation}
t^*_{ij} = d_{1i}k_{ij} d_{2j}
\label{eq:TKscale2}
\end{equation}
and
\begin{equation}
 d_{1i} =\frac{ \tilde{u}_i} {\sum_{j=1}^n  k_{ij} d_{2j}}
\qquad  d_{2j} = \left(\frac{ \tilde{v}_j}{\sum_{i=1}^m  d_{1i} k_{ij}}\right)^{\frac{\gamma}{1+\gamma}}
\label{eq:Constraints}
\end{equation}
}
\eqref{eq:tLimit} and \eqref{eq:dLimits} hold.
\end{lemma}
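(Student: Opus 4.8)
The plan is to treat \eqref{eq:GenOpt} as a smooth convex program once the active boundary constraints are identified, and to prove the two implications separately. I will use as standing facts that, by Remark~\ref{rem:feasible}, $\mathcal V(\tilde u)$ is a non-empty compact convex set on which $f(T)=\sum_{(i,j)\notin\mathcal Z}kl(t_{ij}|k_{ij})+\gamma\sum_{j=1}^n kl(v_{T_j}|\tilde v_j)$ is continuous and strictly convex (so a unique minimiser $T^*$ exists), and that, by Lemma~\ref{lem:lem1}, $t^*_{ij}>0$ for all $(i,j)\notin\mathcal Z$, while the standing no-zero-column hypothesis gives $v^*_j\equiv\sum_i t^*_{ij}>0$ for all $j$. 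Hence, in the free coordinates $\{t_{ij}:(i,j)\notin\mathcal Z\}$, the minimiser lies interior to the non-negativity constraints and $f$ is there differentiable with $\partial f/\partial t_{ij}=\log(t_{ij}/k_{ij})+\gamma\log(v_{T_j}/\tilde v_j)$.

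\emph{Necessity.} Fixing a row $i$ and any two indices $(i,j_1),(i,j_2)\notin\mathcal Z$, I would perturb $T^*$ by moving an amount $\varepsilon$ of mass from entry $(i,j_2)$ to entry $(i,j_1)$: this keeps the matrix in $\mathcal V(\tilde u)$ for $|\varepsilon|$ small, so optimality forces $\partial f/\partial t_{ij_1}(T^*)=\partial f/\partial t_{ij_2}(T^*)$; i.e.\ $\partial f/\partial t_{ij}(T^*)$ is constant along each row, say equal to $\lambda_i$, so $\log(t^*_{ij}/k_{ij})+\gamma\log(v^*_j/\tilde v_j)=\lambda_i$ for all $(i,j)\notin\mathcal Z$ (equivalently, the Lagrange stationarity condition, the row-sum constraints being the only active ones and having linearly independent gradients). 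Exponentiating and setting $d_{1i}\equiv e^{\lambda_i}>0$, $d_{2j}\equiv(\tilde v_j/v^*_j)^{\gamma}>0$ gives $t^*_{ij}=d_{1i}k_{ij}d_{2j}$, which is \eqref{eq:tLimit} (it holds trivially on $\mathcal Z$). Summing over $j$ and using $\sum_j t^*_{ij}=\tilde u_i$ yields the first relation in \eqref{eq:dLimits}, and from $\sum_i d_{1i}k_{ij}=v^*_j/d_{2j}=\tilde v_j\,d_{2j}^{-(1+\gamma)/\gamma}$ one recovers the second relation after raising to the power $\gamma/(1+\gamma)$.

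\emph{Sufficiency.} Conversely, suppose positive scalars $d_{1i},d_{2j}$ satisfy \eqref{eq:tLimit}--\eqref{eq:dLimits}. Then $t^*_{ij}=d_{1i}k_{ij}d_{2j}\ge0$, with $t^*_{ij}=0$ precisely on $\mathcal Z$, and the first relation in \eqref{eq:dLimits} forces $\sum_j t^*_{ij}=\tilde u_i$, so $T^*\in\mathcal V(\tilde u)$. For an arbitrary competitor $T\in\mathcal V(\tilde u)$, the first-order inequality for the convex maps $t\mapsto kl(t|k)$ and $v\mapsto kl(v|\tilde v)$ gives
\[
f(T)-f(T^*)\ \ge\ \sum_{(i,j)\notin\mathcal Z}\log\!\frac{t^*_{ij}}{k_{ij}}\,(t_{ij}-t^*_{ij})\;+\;\gamma\sum_{j=1}^n\log\!\frac{v^*_j}{\tilde v_j}\,(v_{T_j}-v^*_j).
\]
I would then substitute $\log(t^*_{ij}/k_{ij})=\log d_{1i}+\log d_{2j}$ and the identity $\gamma\log(v^*_j/\tilde v_j)=-\log d_{2j}$ (which follows from \eqref{eq:dLimits} via $v^*_j=\tilde v_j\,d_{2j}^{-1/\gamma}$): the $\log d_{1i}$ terms give $\sum_i\log d_{1i}\big(\sum_j t_{ij}-\sum_j t^*_{ij}\big)=0$ because $T$ and $T^*$ have the same row sums $\tilde u$, and the two remaining $\log d_{2j}$ sums cancel since $\sum_i(t_{ij}-t^*_{ij})=v_{T_j}-v^*_j$. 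Hence $f(T)\ge f(T^*)$ for every feasible $T$, so $T^*$ is a minimiser of \eqref{eq:GenOpt}, and the minimiser by strict convexity.

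\emph{Main obstacle.} The substantive steps are the boundary bookkeeping in the necessity part --- guaranteeing the $\log$ terms in $\nabla f(T^*)$ are finite, which is exactly where Lemma~\ref{lem:lem1} and the no-zero-column hypothesis are used --- and, in the sufficiency part, the algebraic reduction $\gamma\log(v^*_j/\tilde v_j)=-\log d_{2j}$ forced by the exponent $\gamma/(1+\gamma)$ in \eqref{eq:dLimits}; once this is available, the cross-terms cancel precisely because $T$ and $T^*$ share the row-sum constraint, and the remainder is routine convexity.
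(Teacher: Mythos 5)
Your proof is correct, and both directions check out: the exchange-perturbation argument in the necessity part is a valid substitute for Lagrangian stationarity (it is exactly the statement that $\partial f/\partial t_{ij}(T^*)$ is constant along each row of free coordinates, which is legitimate because Lemma~\ref{lem:lem1} places $T^*$ in the interior of the non-negativity constraints), and the algebra recovering \eqref{eq:dLimits} from $d_{2j}=(\tilde v_j/v^*_j)^{\gamma}$ is right. The paper's proof takes the same first step --- it writes down the Lagrangian with multipliers $\alpha_i$ for the row constraints, sets $\partial L/\partial t_{ij}=0$, and reads off $t^*_{ij}=d_{1i}k_{ij}d_{2j}$ with $d_{1i}=e^{-\alpha_i}$, $d_{2j}=e^{-\beta_j}$ and $\beta_j=\gamma\log\bigl(\sum_l t^*_{lj}/\tilde v_j\bigr)$ --- but it disposes of the ``iff'' in one line by asserting that stationarity of the Lagrangian characterizes the minimizer of this convex problem. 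Where you genuinely add something is the sufficiency direction: rather than appealing to that standard fact, you verify it directly via the first-order convexity inequality and show that the cross terms cancel because competitors share the row sums $\tilde u$ and because $\gamma\log(v^*_j/\tilde v_j)=-\log d_{2j}$. This makes the proof self-contained (no constraint-qualification or KKT-sufficiency citation is needed) at the cost of a little extra algebra; the paper's version is shorter but leans on the reader accepting the Lagrangian characterization for convex programs with affine equality constraints. One small point worth stating explicitly in your sufficiency argument is that the inequality $kl(t|k)\ge kl(t^*|k)+\log(t^*/k)\,(t-t^*)$ remains valid at boundary competitors with $t_{ij}=0$ (it reduces to $k\ge k-t^*$), so the bound holds for every $T\in\mathcal{V}(\tilde u)$ and not only for strictly positive ones.
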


\vspace*{.5cm}

 \begin{proof}  
If
   $(i,j) \in \mathcal{Z}$, we have  $t^*_{ij} = k_{ij} =0$  and  the expression for $t^*_{ij}$ in \eqref{eq:tLimit} holds. 
  %Since we are considering $t_{ij} >0$ for  $(i, j) \notin \mathcal{Z}$ and $v_j >0$ for all $j$,
The Lagrangian associated with this optimization  problem is
\begin{align*}
L(T, \alpha) & =    \sum_{(i,j) \notin {\mathcal Z}}   kl(t_{ij} | k_{ij}) 
 +\gamma \sum_{j=1}^n kl \left(\sum_{l=1}^m t_{lj} | \tilde{v}_j \right)	\\
&+\sum_{i=1}^m \alpha_i \left(\sum_{l=1}^n t_{il}- \tilde{u}_i \right)
\end{align*}
%\\
%&= 
 % \sum_{ (i,j) \notin \mathcal{Z}}  
% (\alpha_i    -1-\gamma )t_{ij}+  t_{ij}\log \left( \frac{t_{ij}}{k_{ij} }  \right)
%+ \gamma   t_{ij}\log \left(\frac{\sum_{i=1}^m t_{ij}}{\tilde{v}_j}\right) 
%   - \sum_{i=1}^m\alpha_i \tilde{u}_i +\gamma \sum_{j=1}^n  \tilde{v}_j

For $(i,j) \notin \mathcal{Z}$ and $t_{ij} \neq 0$, 
\begin{align*}
\frac{\partial L}{\partial t_{ij}}(T, \alpha) &  =   \log \left( \frac{t_{ij}}{k_{ij} }  \right) + \gamma   \log \left(\frac{\sum_{l=1}^m t_{lj}}{\tilde{v}_j}\right) 
+\alpha_i  
\end{align*}
From Lemma \ref{lem:lem1}, we know that $t^*_{ij} >0$ for all $(i,j) \notin \mathcal{Z}$.
Hence $T^*$ is a minimizer iff $\exists$ scalars, $\alpha_1, \dots, \alpha_m$, such that
$\frac{\partial L}{\partial t_{ij}}(T^*,\alpha) = 0$ for all $(i,j) \notin \mathcal{Z}$, that is,
%\begin{align*}
$\log \left( \frac{t^*_{ij}}{k_{ij} }  \right) =  -\alpha_i -\beta_j$,	
%\end{align*}
where
\begin{align}
\label{eq:beta}
 \beta_j \equiv \gamma   \log \left(\frac{\sum_{l=1}^m t^*_{lj}}{\tilde{v}_j}\right)  
\end{align}
Hence
\begin{align}
t^*_{ij}&= k_{ij}\exp \left( -\alpha_i -\beta_j \right)	
 = \exp(  -\alpha_i )	
k_{ij}  \exp  (- \beta_j)	\nonumber \\					
 &= d_{1i}k_{ij}d_{2j} 
 \label{eq:tij2}	
  \end{align}
where
$
 d_{1i}  \equiv \exp(   -\alpha_i )$ and 
$d_{2j}  \equiv\exp  (- \beta_j)$.
Using the row constraints, we have
$\tilde{u}_i = \sum_{j=1}^m t^*_{ij} =\sum_{j=1}^m d_{1i}k_{ij}d_{2j}$.
Hence,
$d_{1i} = \frac{\tilde{u}_i}{\sum_{j=1}^mk_{ij}d_{2j}}$.
It follows, from  \eqref{eq:beta} and \eqref{eq:tij2}, that
\begin{align*}
%\label{eq:colC20}
d_{2j} \sum_{i=1}^m d_{1i}k_{ij}  = \sum_{i=1}^m t^*_{ij} =  \tilde{v}_j e^{\beta_j/\gamma} = \tilde{v}_j d_{2j}^{-1/\gamma}
%= {c} D_2^{-\gamma} \tilde{v}
\end{align*}
that is,
$
d_{2j}^{ \frac{1+\gamma}{\gamma}}\sum_{i=1}^md_{1i}k_{ij}= \tilde{v}_j$,
or
\[
%\label{eq:d2jtilde0}
d_{2j}  =\left( \frac{ \tilde{v}_j }{ \sum_{i=1}^m d_{1i}k_{ij} } \right)^{\frac{\gamma}{ 1+ \gamma}} 
\]

\end{proof}

\section{Convergence of the Modified SK algorithms}
\label{sec:Proof}
To prove  Theorem \ref{th:MSKA}, we need a result from \cite{Bregman1967}.
Let
\[
\mathcal{X} = \left\{ x \in \mathbb{R}^q: x_i >0 \text{ for } i =1,2, \dots, q \right\}
\]
and, for a fixed $\tilde{x} \in \mathcal{X}$,  consider  the strictly convex function, $f: \bar{\mathcal X}\rightarrow \mathbb{R}$, given by 
\begin{equation}
f(x) =KL(x|\tilde{x})
= \sum_{i=1}^q kl(x_i|\tilde{x}_i)
%= \sum_{x_i \neq 0} x_i \log\left(\frac{x_i}{\tilde{x}_i} \right) -x_i +\sum_{i=1}^q\tilde{x}_i
\end{equation}
where $\overline{\mathcal X}$ is the closure of $\mathcal{X}$,
that is 
\[
\overline{\mathcal X} =  \mathbb{R}^q_+ =\left\{ x \in \mathbb{R}^q : x_i \ge 0 \text{ for } i =1,2, \dots, q\right\} 
\]
Also, let
$A_i \in \mathbb{R}^{m_i\times q}$ and $b_i\in \mathbb{R}^{m_i}$,  for $i=1,2,\dots, N$, for some positive integers, $N$ and $m_i$, and consider the following optimization problem: 
\begin{align}
\label{eq:opt0}
\min_{x \in \overline{\mathcal X} }f(x) \qquad s.t.\quad  A_i x = b_i \quad i=1, \dots, N
\end{align}

Let  $\mathcal{C}_i$ be the closed convex set, $\{ x \in \mathbb{R}^q : A_ix=b_i\}$, and assume that 
$\mathcal{C} \equiv \bigcap_{i =1}^N\mathcal{C}_i$ is non-empty.
We also require  the following assumption.\newline

\begin{assumption}
\label{ass1}
For each 
%$i \in \mathcal{I}$ 
$i=1,2,\dots, N$ and $x\in\mathcal{X}$, $\exists$ $y^*\in \mathcal{X}\cap \mathcal{C}_i$ such that
\begin{align}
\label{eq:opti}
KL(y^* |x) = \min_{y\in \mathcal{X}\cap\mathcal{C}_i} KL(y |x)
\end{align}
\vspace{.5em}
\end{assumption}
Note that, in this assumption, optimization is over \mbox{$\mathcal{X}\cap \mathcal{C}_i$} and not over $\overline{\mathcal X}\cap \mathcal{C}_i$.
We will denote the point $y^*$ above by $P_i(x)$ and refer to it  as the {\em $KL$-projection of $x$ onto $\mathcal{C}_i$.} Let $p$
%$p:\mathcal{I} \rightarrow \mathcal{I}$ be a permutation on $\mathcal{I}$ that is
be a permutation on $\{1,2,\dots, N\}$, that is
\[
p^k(i) \neq i \text{ for } k=1,\dots, N-1 \qquad  p^N(i) = i
\]
where $p^k$ is the application of $p$, $k$ times.\newline
%where $l$ is the number of elements of $\mathcal{I}$.
%Also let
%\[
%A=\left(\begin{array}{c}
%A_1 \\ A_2 \\ \vdots \\A_N
%\end{array}\right)
%\]

The  following result may be gleaned from  \cite{Bregman1967}.\newline

\begin{theorem}
\label{th:Bregman}
Suppose  $\mathcal{C}\cap \overline{\mathcal X}$ is non-empty, that Assumption \ref{ass1} holds, and
\begin{align}
x(l+1) &= P_{i_{l}}(x(l))		\quad \text{with} \quad x(0) \in {\mathcal X}
\end{align}
where $i_{l+1} = p(i_l)$.
%	&\qquad i_0 \in {\mathcal I}\\
Then,
%\begin{equation}
$\lim_{l \rightarrow \infty }x(l) = x^* \in {\mathcal C}\cap \overline{\mathcal X}$.
%\end{equation}
Moreover, if
\begin{equation}
\label{eq:range}
\nabla f(x(0)) \text{ is  in the range of }
\left( A'_1\; A'_2\,  \cdots\, A'_m
\right)
\end{equation}
where $'$ denotes transpose,
then
\begin{align}
f(x^*) = \min_{x \in \overline{\mathcal X} }f(x) \quad s.t.\quad  A_ix = b_i \quad i=1, \dots, N
\end{align}
\end{theorem}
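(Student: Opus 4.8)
\emph{Overall approach.} The plan is to reproduce Bregman's cyclic-projection argument, specialized to $f(x)=KL(x\mid\tilde x)$, whose associated Bregman distance is $D_f(a,b)\defined f(a)-f(b)-\langle\nabla f(b),a-b\rangle=KL(a\mid b)$ (the reference $\tilde x$ cancels). The structural workhorse is a \emph{generalized Pythagorean identity}: for $x\in\mathcal X$, its $KL$-projection $y=P_i(x)$ onto the affine set $\mathcal C_i$, and \emph{any} $z\in\mathcal C_i\cap\overline{\mathcal X}$,
\[
KL(z\mid x)=KL(z\mid y)+KL(y\mid x).
\]
I would derive this from the stationarity condition for problem \eqref{eq:opti}: since $A_iy=b_i$ is affine and $y\mapsto KL(y\mid x)$ is differentiable and strictly convex on $\mathcal X$, at the minimizer $y=P_i(x)\in\mathcal X$ one has $\nabla f(y)-\nabla f(x)=\log(y/x)\in\operatorname{range}(A_i')$; hence $\langle\nabla f(y)-\nabla f(x),z-y\rangle=0$ whenever $A_iz=A_iy=b_i$, and the identity is just the three-point identity $D_f(z,x)=D_f(z,y)+D_f(y,x)+\langle\nabla f(y)-\nabla f(x),z-y\rangle$ with the last term annihilated. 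It is the \emph{affineness} of the constraints that makes this an equality, not merely an inequality.

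\emph{Monotonicity and compactness.} Fix a strictly positive $z\in\mathcal C\cap\mathcal X$ (available from Remark~\ref{rem:feasible} in the OT setting). Since $z\in\mathcal C\subseteq\mathcal C_{i_l}$ for every $l$, applying the Pythagorean identity at step $l$ gives $KL(z\mid x(l+1))=KL(z\mid x(l))-KL(x(l+1)\mid x(l))\le KL(z\mid x(l))$. Thus $\{KL(z\mid x(l))\}$ is nonincreasing and bounded below by $0$, hence convergent, and $\sum_l KL(x(l+1)\mid x(l))<\infty$, so $KL(x(l+1)\mid x(l))\to 0$ (asymptotic regularity). From the explicit form of $kl(\cdot\mid\cdot)$, the bound $kl(z_i\mid x_i(l))\le KL(z\mid x(0))$ confines each coordinate $x_i(l)$ to a compact subinterval of $(0,\infty)$ bounded away from $0$ and $\infty$; so the whole orbit lies in a compact set $\mathcal K\subset\mathcal X$.

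\emph{Identifying the limit.} Being bounded, $\{x(l)\}$ has a subsequential limit $x^*$. Because $p$ has full period $N$, I refine to a subsequence along which the block $x(l_k),x(l_k+1),\dots,x(l_k+N)$ converges; since $KL(x(l+1)\mid x(l))\to 0$ and the orbit lies in $\mathcal K\subset\mathcal X$, on which $KL$ is continuous and vanishes only on the diagonal, consecutive iterates get arbitrarily close, so the whole block converges to the same point $x^*$. Each $x(l+1)\in\mathcal C_{i_l}$ with $\mathcal C_{i_l}$ closed, and over a block of length $N$ every index in $\{1,\dots,N\}$ occurs; letting $k\to\infty$ yields $x^*\in\mathcal C_i$ for all $i$, i.e.\ $x^*\in\mathcal C\cap\overline{\mathcal X}$. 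Finally, re-running the monotonicity argument with $z=x^*$: $\{KL(x^*\mid x(l))\}$ is nonincreasing and a subsequence of it converges to $KL(x^*\mid x^*)=0$, so $KL(x^*\mid x(l))\to 0$, which (again by compactness) forces $x(l)\to x^*$. This establishes $\lim_l x(l)=x^*\in\mathcal C\cap\overline{\mathcal X}$.

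\emph{Optimality and the main obstacle.} For the ``moreover'' claim, suppose $\nabla f(x(0))=\log(x(0)/\tilde x)$ lies in the range of $(A_1'\,\cdots\,A_N')$. Each projection step changes the gradient by $\log(x(l+1)/x(l))\in\operatorname{range}(A_{i_l}')$, so by induction $\nabla f(x(l))\in\operatorname{range}(A_1'\,\cdots\,A_N')$ for all $l$; as this is a closed subspace, $\nabla f$ is continuous on $\mathcal X$, and $x^*\in\mathcal X$ (which holds in the OT application by Lemma~\ref{lem:lem1}), the limit satisfies $\nabla f(x^*)=\sum_{i=1}^N A_i'\mu_i$ for some $\mu_i$. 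Then for any $x\in\mathcal C\cap\overline{\mathcal X}$, convexity of $f$ gives $f(x)\ge f(x^*)+\langle\nabla f(x^*),x-x^*\rangle=f(x^*)+\sum_i\langle\mu_i,A_ix-A_ix^*\rangle=f(x^*)$, since $A_ix=A_ix^*=b_i$; hence $x^*$ minimizes $f$ over $\mathcal C\cap\overline{\mathcal X}$, and strict convexity makes it the unique minimizer. I expect the delicate point to be the \emph{boundary behaviour of the nonnegative orthant}: keeping the iterates, and especially the limit $x^*$, inside the open orthant $\mathcal X$ so that $\nabla f$ is finite and continuous there and the Pythagorean and gradient-range arguments apply verbatim. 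This is precisely where Assumption~\ref{ass1} (projections land in $\mathcal X$) and the existence of a strictly positive feasible point carry the weight, and it is the step most easily glossed over; the Pythagorean identity and the Fej\'er-monotonicity bookkeeping are routine once the projection's stationarity condition is in hand.
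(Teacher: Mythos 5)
The paper does not actually prove Theorem~\ref{th:Bregman}: it is imported verbatim from Bregman's 1967 paper (``may be gleaned from \cite{Bregman1967}''), so there is no in-paper argument to compare against. Judged on its own, your reconstruction is the standard and essentially correct Bregman cyclic-projection proof: the generalized Pythagorean \emph{equality} for KL-projections onto affine sets (via the three-point identity and $\nabla f(y)-\nabla f(x)\in\operatorname{range}(A_i')$), Fej\'er monotonicity of $KL(z\mid x(l))$, summability of $KL(x(l+1)\mid x(l))$, cluster-point identification over a full period of $p$, the restart with $z=x^*$, and the gradient-in-range argument for optimality. This is the right skeleton and the individual steps are justified correctly.

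Two gaps remain relative to the theorem \emph{as stated}, both of which you flag but neither of which you close. First, your compactness step fixes a strictly positive $z\in\mathcal C\cap\mathcal X$, whereas the hypothesis only gives $\mathcal C\cap\overline{\mathcal X}\neq\emptyset$; with a boundary point $z$ (some $z_i=0$) the bound $kl(0\mid x_i(l))=x_i(l)\le KL(z\mid x(0))$ only confines $x_i(l)$ from above, not away from $0$, so the orbit need not stay in a compact subset of the \emph{open} orthant and the limit may sit on the boundary (which is exactly why the theorem concludes $x^*\in\mathcal C\cap\overline{\mathcal X}$ rather than $\mathcal C\cap\mathcal X$). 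The convergence half survives this (boundedness in $\overline{\mathcal X}$ plus a Pinsker-type bound $kl(t\mid s)\gtrsim (t-s)^2/\max(t,s)$ still gives asymptotic regularity), but your argument should be rewritten to not presuppose interiority. Second, and more seriously, the optimality half as written evaluates $\nabla f(x^*)$ and so genuinely requires $x^*\in\mathcal X$; if $x^*_i=0$ the gradient blows up and the first-order inequality $f(x)\ge f(x^*)+\langle\nabla f(x^*),x-x^*\rangle$ is vacuous. Bregman's own treatment avoids this by working with $\langle\nabla f(x(l)),x-x(l)\rangle=\sum_i\langle\mu_i(l),b_i-A_ix(l)\rangle$ along the sequence and controlling the limit, rather than passing $\nabla f$ through the limit. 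For the paper's application both gaps are harmless --- Remark~\ref{rem:feasible} supplies a strictly positive feasible point and Lemma~\ref{lem:lem1} forces $t^*_{ij}>0$ off $\mathcal Z$ and $v^*_j>0$ --- but as a proof of the general statement the second step needs the Bregman-style limiting argument, not the pointwise gradient at $x^*$.
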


The algorithm in Theorem~2 initially chooses some point, $x(0)\in \mathcal{X}$.
It then cycles indefinitely through each index, $i $, and projects onto   $\mathcal{C}_i$.
It can be viewed as an alternating projection algorithm.
The resulting sequence, $\{x(l)\}_{l=0}^\infty$, converges to a point  which is common to all of the sets, $\mathcal{C}_i$ 
and $\overline{\mathcal X}$.
% where $\bar{\mathcal S}$ is the closure of $\bar{\mathcal S}$, 
In addition, if $\nabla f(x(0))$ satisfies \eqref{eq:range}, then $x^*$  is a minimizer for optimization problem \eqref{eq:opt0}.
This algorithm is very useful when one can readily solve the optimization problems in \eqref{eq:opti}.\newline

The optimization problem of this paper \eqref{eq:GenOpt} can be rewritten as
%\begin{align}
$\min_{(T,v) \in \overline{\mathcal{TV}} }J(T,v)$,
%\end{align}
subject to
\begin{align}
\label{eq:rowconstraints3a}
\sum_{j=1}^n t_{ij} = \tilde{u}_i, \, i=1,\dots, m,
 \qquad \sum_{i=1}^m t_{ij} = v_j  \, , j=1,\dots n,
\end{align}
where
%{\small
\begin{align}
\label{eq:J(T,v)}
J(T,v) &=
KL(T|K) + \gamma KL(v|\tilde{v})
= KL(T|K) + KL(\gamma v|\tilde{\gamma v})
%\sum_{ t_{ij} \neq 0 }   t_{ij}\log \left( \frac{t_{ij}}{k_{ij} } \right) -t_{ij} +\sum_{(i,j) \notin \mathcal{Z}} k_{ij}
%+\gamma\left( \sum_{v_j \neq 0} v_j \log \left(\frac{v_j}{\tilde{v}_j}\right)-v_j +\sum_{j=1}^n \tilde{v}_j \right)
%\nonumber \\
%&=
%\sum_{ t_{ij} \neq 0 }   t_{ij} \log \left( \frac{t_{ij}}{k_{ij}} \right) - t_{ij} + \sum_{ (i,j) \notin \mathcal{Z} } k_{ij}
%+ \sum_{v_j \neq 0} \gamma v_j 
%\log \left( \frac{\gamma v_j}{ \gamma \tilde{v}_j } \right) 
%-\gamma v_j 
%+\gamma \sum_{j=1}^n \tilde{v}_j 
%\right)
\end{align} 
%}
 $\gamma >0$ and
  {\small
\begin{align*}
\label{eq:Script2}
\mathcal{S} =& \left\{ (T,v) : T \in \mathbb{R}^{m\times n}, v \in \mathbb{R}^n  \text{ and }  t_{ij} = 0 
\;\forall \; (i,j) \in \mathcal{Z} \right\}
\\
 %\end{equation*}
 %}
 %\begin{equation*}
 %{\small
\mathcal{TV} =& \{(T, v) \in \mathcal{S} :  t_{ij} >0
\,\forall\, (i,j) \notin \mathcal{Z} \text { and } v_j > 0\, \forall\, i \}
 \end{align*}
 }
 Let $q= n_T + n$, where 
 $n_T $ is the number of index pairs $(i,j)$ not in $\mathcal{Z}$.
 %of elements of $T$ which are not necessarily zero.
Then, by   appropriate definition of $x$  in $\mathbb{R}^q$,
one can associate each element of $x$ to an element of $T$ or $\gamma v$.
We denote this by $x= \text{vec}(T, \gamma v)$
%one-to-one linear map $L: \bar{\mathcal S} \rightarrow {\mathcal X}$
 and the objective function in \eqref{eq:J(T,v)}   can be written as
%\[
$f(x) =KL(x |\tilde{x})$,
%= \sum_{x_i \neq 0}x_i \log \left( \frac{x_i}{\tilde{x}_i } \right) -x_i + \sum_{j=1}^n\tilde{x}_i
%\]
where $\tilde{x} = \text{vec}(K, \gamma\tilde{v})$.
Also, the constraints in \eqref{eq:rowconstraints3a} can be expressed as
%\[
$A_1 x = b_1, \;\; A_2 x = b_2$,
%\]
where
\begin{align*}
A_1x &= b_1 \quad \text{iff} \quad (T,v) \in \tilde{\mathcal C}_1
\equiv \left\{(T,v)  \in \mathcal{S}: T {\bm 1} = \tilde{u} \right\}\\
A_2x &= b_2 \quad \text{iff} \quad  (T,v) \in 
\tilde{\mathcal C}_2 \equiv \left\{(T,v) \in \mathcal{S} : T' {\bm 1} = v \right\}
\end{align*}

If $(T, v), (S,w) \in \mathcal{TV}$,  $y=\text{vec}(T, \gamma v)$, $x=\text{vec}(S, \gamma w)$:
\[
%d(y,x) 
KL(y|x) 
%= D(T, v, S, w)  
= KL(T|S) +\gamma KL(v|w)
%\equiv \sum_{ (i,j) \notin {\mathcal IJ}} t_{ij}\log \left( \frac{t_{ij}}{s_{ij} } \right) +s_{ij} - t_{ij}
% +\gamma \sum_{j=1}^n v_j\log \left(\frac{v_j}{w_j}\right)	+w_j - v_j
 \]

\begin{lemma}
\label{lem:lem2}
If $(S, w) \in \mathcal{TV}$ then
\[
(T^*, v^*) = \text{argmin}_{(T, v) \in \mathcal{TV}\cap \tilde{\mathcal C}_1 }  KL(T|S) +\gamma KL(v|w)
\]
iff
%\begin{align}
$t^*_{ij} = c_{1i} s_{ij} ,
\;\; v^*_j  = w_j$,
%\end{align}
where
\begin{equation}
\label{eq:c1i0}
 c_{1i}= \frac{\tilde{u}_i}{\Sigma_{j=1}^ns_{ij}}
\end{equation}
\end{lemma}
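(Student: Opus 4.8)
The plan is to solve the constrained minimization explicitly by exploiting the fact that the source ($\tilde{\mathcal C}_1$) constraint couples only the entries within a single row of $T$, while the objective $KL(T|S) + \gamma KL(v|w)$ is separable across rows of $T$ and across the vector $v$. First I would observe that $v$ appears only in the term $\gamma KL(v|w)$ and is completely unconstrained on $\mathcal{TV}\cap\tilde{\mathcal C}_1$ (the set $\tilde{\mathcal C}_1$ constrains only $T{\bm 1}=\tilde u$, not $v$); since $KL(v|w)\ge 0$ with equality iff $v=w$, the minimizing $v$ is forced to be $v^*=w$, giving the claimed formula $v^*_j = w_j$. This reduces the problem to minimizing $KL(T|S) = \sum_{(i,j)\notin\mathcal Z} kl(t_{ij}|s_{ij})$ over $T\in\mathcal{TV}$ subject to $\sum_j t_{ij} = \tilde u_i$ for each $i$.

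Next I would note that this residual problem decouples into $m$ independent one-row problems: for each fixed $i$, minimize $\sum_{j:(i,j)\notin\mathcal Z} kl(t_{ij}|s_{ij})$ over positive $t_{ij}$ subject to $\sum_{j:(i,j)\notin\mathcal Z} t_{ij} = \tilde u_i$. This is exactly the classical $KL$-projection onto a scaled probability simplex. I would solve it via Lagrange multipliers: setting $\partial/\partial t_{ij}\big(kl(t_{ij}|s_{ij}) + \mu_i t_{ij}\big) = \log(t_{ij}/s_{ij}) + \mu_i = 0$ yields $t_{ij} = s_{ij} e^{-\mu_i}$, i.e.\ $t_{ij}$ is a common positive multiple $c_{1i}$ of $s_{ij}$ across the allowable columns. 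Imposing the row constraint $c_{1i}\sum_{j:(i,j)\notin\mathcal Z} s_{ij} = \tilde u_i$ forces $c_{1i} = \tilde u_i / \sum_{j} s_{ij}$, which is \eqref{eq:c1i0}. Since $(S,w)\in\mathcal{TV}$ guarantees $s_{ij}>0$ for all $(i,j)\notin\mathcal Z$ (so the denominator is positive and no row of $S$ is zero) and $\tilde u_i>0$, we get $c_{1i}>0$ and hence $t^*_{ij}>0$, confirming feasibility in $\mathcal{TV}$; for $(i,j)\in\mathcal Z$ we have $t^*_{ij} = 0 = s_{ij}$ trivially, so $t^*_{ij} = c_{1i}s_{ij}$ holds for all $(i,j)$.

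For the rigorous ``iff'', I would invoke strict convexity: $kl(\cdot|s_{ij})$ is strictly convex on $(0,\infty)$, so $KL(T|S) + \gamma KL(v|w)$ is strictly convex on the convex set $\mathcal{TV}\cap\tilde{\mathcal C}_1$, whence the minimizer — if attained in the interior — is unique and is characterized exactly by the stationarity (KKT) conditions above, which we have shown are equivalent to $t^*_{ij} = c_{1i}s_{ij}$, $v^*_j = w_j$. The only genuine subtlety, and the step I expect to be the main obstacle, is that the minimization in the statement is over the \emph{open} set $\mathcal{TV}$ (entries strictly positive), not its closure, so one must argue the infimum is actually attained and not approached only on the boundary; this is handled by the same barrier argument used in Lemma~\ref{lem:lem1} — as any $t_{ij}\to 0^+$ with $(i,j)\notin\mathcal Z$, the directional derivative of $kl(t_{ij}|s_{ij})$ behaves like $\log t_{ij}\to -\infty$, so moving inward strictly decreases the objective, ruling out boundary minimizers and legitimizing the Lagrangian/interior-stationarity analysis. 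With attainment secured, strict convexity gives uniqueness and the ``iff'' follows.
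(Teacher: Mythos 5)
Your proof is correct and follows essentially the same route as the paper's: Lagrangian stationarity giving $t^*_{ij} = e^{-\alpha_i}s_{ij}$ and $v^*_j = w_j$, followed by the row-sum constraint fixing $c_{1i} = \tilde{u}_i/\sum_j s_{ij}$. The only difference is that you additionally justify attainment of the minimum on the open set $\mathcal{TV}$ (via the barrier behaviour of $kl$ near zero) and the sufficiency of the stationarity conditions via strict convexity, points the paper's proof leaves implicit.
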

\begin{proof}%
%With $t_{ij}= 0$ for $(i,j) \in \mathcal{Z}$,  the 
Since  $t_{ij}= 0$ for $(i,j) \in \mathcal{Z}$, the Lagrangian associated with this optimization  problem is
\begin{align*}
L(T, v,\alpha)  &=  KL(T|S) +\gamma KL(v|w)
% \sum_{ (i,j) \notin {\mathcal IJ}} t_{ij}\log \left( \frac{t_{ij}}{s_{ij}} \right) -t_{ij} + s_{ij}
% +\gamma \left(\sum_{j=1}^ nv_j \log \left(\frac{v_j}{w_j}\right)	 -v_j + w_j \right) \\
+\sum_{i=1}^m \alpha_i \left(\sum_{j=1}^n t_{ij}- \tilde{u}_i \right)	
%\\
%&= 
 % \sum_{ (i,j) \notin \mathcal{Z}}  
% (\alpha_i -1) t_{ij}+ t_{ij}\log \left( \frac{t_{ij}}{s_{ij} }  \right) +s_{ij}
%+
%  \sum_{j=1}^ n  \kappa v_j + \gamma v_j \log \left(\frac{v_j}{w_j}\right) -\gamma v_j
 % \\
  %& -\sum_{i=1}^m\alpha_i \tilde{u}_i 
  %+  (\gamma -\kappa )\sum_{j=1}^nw_j
\end{align*}
When $(i,j) \notin \mathcal{Z}$,
\begin{align*}
\frac{\partial L}{\partial t_{ij}}  =    \log \left( \frac{t_{ij}}{s_{ij} }  \right)  +\alpha_i \qquad
\frac{\partial L}{\partial v_j}  =  \gamma \log \left( \frac{v_j}{w_j}\right) 
\end{align*}
Setting these to zero yields
\begin{align*}
\log \left( \frac{t^*_{ij}}{s_{ij} }  \right) =  -\alpha_i\qquad
\log \left( \frac{v^*_j}{w_j}\right)  = 0
\end{align*}
Hence,
$
t^*_{ij}= s_{ij}\exp \left( -\alpha_i \right)	
 = c_{1i}s_{ij}	
$,
where
$
 c_{1i} = \exp(   -\alpha_i)
$
and
$
v^*_j = w_j
$.
Also, $\sum_{j=1}^n t^*_{ij} = \tilde{u}_i$ implies \eqref{eq:c1i0}.
%\[
%c_{1i} = 
%\frac{\tilde{u}_i}{\sum_{j=1}^n s_{ij}}
%\]
\end{proof}
\vspace{.5em}

%
%%%%%%%%%%%%%%%%%%%%%%%%%%%% Lemma %%%%%%%%%%%%%%%%%%
%
\begin{lemma}
\label{lem:lem3}
If $(S, w) \in \mathcal{TV}$ then
\[
(T^*, v^*) = \text{argmin}_{(T, v) \in \mathcal{TV}\cap \tilde{\mathcal C}_2 } KL(T|S) +\gamma KL(v|w)
\]
iff 
%\begin{align}
$t^*_{ij} =c_{2j}s_{ij} ,	\;\; v^*_j =  c_{2j}^{-\frac{1}{\gamma}} w_j $,
%c_{2j} &=  {l}c_{2j}^\frac{1}{1+\gamma}	\qquad
%\end{align}
where
\begin{align}
\label{eq:c2j0}
c_{2j} = \left(\frac{w_j}{\Sigma_{i=1}^m s_{ij}}\right)^\frac{\gamma}{1+\gamma }
\end{align}
\end{lemma}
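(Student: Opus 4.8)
The plan is to mirror the proof of Lemma~\ref{lem:lem2}, the only structural difference being that the constraint set is now $\tilde{\mathcal C}_2$, whose defining relations $\sum_{i=1}^m t_{ij} = v_j$ couple the $t$-variables to the $v$-variables (in Lemma~\ref{lem:lem2} the row constraint involved only $T$, so $v^*=w$ fell out trivially). Since $t_{ij}=0$ is forced for $(i,j)\in\mathcal{Z}$, I would introduce only the multipliers $\beta_1,\dots,\beta_n$ for the $n$ column-coupling constraints and form the Lagrangian
\[
L(T,v,\beta) = KL(T|S) + \gamma KL(v|w) + \sum_{j=1}^n \beta_j\Big(\sum_{i=1}^m t_{ij} - v_j\Big).
\]

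First I would record the stationarity conditions. For $(i,j)\notin\mathcal{Z}$,
\[
\frac{\partial L}{\partial t_{ij}} = \log\!\Big(\frac{t_{ij}}{s_{ij}}\Big) + \beta_j,
\qquad
\frac{\partial L}{\partial v_j} = \gamma\log\!\Big(\frac{v_j}{w_j}\Big) - \beta_j .
\]
Setting both to zero gives $t^*_{ij} = s_{ij}\,e^{-\beta_j}$ and $v^*_j = w_j\,e^{\beta_j/\gamma}$; writing $c_{2j}\equiv e^{-\beta_j}>0$ yields exactly $t^*_{ij}=c_{2j}s_{ij}$ and $v^*_j = c_{2j}^{-1/\gamma}w_j$, the claimed form. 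The one genuinely new algebraic step — the place where the exponent $\gamma/(1+\gamma)$ appears — is eliminating $\beta_j$ via the constraint: substituting into $\sum_{i=1}^m t^*_{ij}=v^*_j$ gives $c_{2j}\sum_{i=1}^m s_{ij} = c_{2j}^{-1/\gamma}w_j$, hence $c_{2j}^{(1+\gamma)/\gamma} = w_j/\sum_{i=1}^m s_{ij}$, which is precisely \eqref{eq:c2j0}.

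It remains to argue that this stationary point is the (unique) argmin over $\mathcal{TV}\cap\tilde{\mathcal C}_2$, i.e.\ that the first-order conditions are both necessary and sufficient. Sufficiency is immediate: $KL(\cdot|S)+\gamma KL(\cdot|w)$ is continuous and strictly convex and the feasible set is convex, so any point satisfying the Lagrangian stationarity conditions for some $\beta$ is the unique global minimizer over the closure; moreover the formulas above keep all $t^*_{ij}>0$ (since $S\in\mathcal{TV}$), and then each $v^*_j=\sum_i t^*_{ij}>0$ too (every column contains at least one $(i,j)\notin\mathcal{Z}$), so the point lies in $\mathcal{TV}$. For necessity I would reuse the directional-derivative argument of Lemma~\ref{lem:lem1}: a minimizer over $\overline{\mathcal{TV}}\cap\tilde{\mathcal C}_2$ exists because the $kl$-terms are coercive and, on $\tilde{\mathcal C}_2$, $v$ is bounded whenever $T$ is, so sublevel sets are compact; it is unique by strict convexity; and it cannot have $t^*_{ij}=0$ for any $(i,j)\notin\mathcal{Z}$, since moving toward an interior feasible point would drive $\partial L/\partial t_{ij}\to-\infty$ and strictly decrease the objective. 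Being interior to $\mathcal{TV}$, the minimizer must annihilate $\partial L/\partial t_{ij}$ and $\partial L/\partial v_j$, which forces the stated form. The only (minor) obstacle is bookkeeping the coupled $t$–$v$ constraint correctly when solving for $\beta_j$; everything else is parallel to Lemmas~\ref{lem:lem1}–\ref{lem:lem2}.
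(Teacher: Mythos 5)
Your proposal is correct and follows essentially the same route as the paper: form the Lagrangian with multipliers $\beta_j$ for the column-coupling constraints, set $\partial L/\partial t_{ij}$ and $\partial L/\partial v_j$ to zero to get $t^*_{ij}=c_{2j}s_{ij}$ and $v^*_j=c_{2j}^{-1/\gamma}w_j$ with $c_{2j}=e^{-\beta_j}$, and eliminate $\beta_j$ via $\sum_i t^*_{ij}=v^*_j$ to obtain \eqref{eq:c2j0}. Your added justification that the stationarity conditions are necessary and sufficient (strict convexity, interiority of the minimizer) is sound and fills in a step the paper leaves implicit.
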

\begin{proof}
Since  $t_{ij}= 0$ for $(i,j) \in \mathcal{Z}$,  the
Lagrangian associated with this optimization  problem is given by
\begin{align*}
L(T, v, \beta)  &=    KL(T|S) +\gamma KL(v|w)
% \sum_{ (i,j) \notin \mathcal{Z}}   t_{ij}\log \left( \frac{t_{ij}}{s_{ij} }  \right) -t_{ij} +s_{ij}
% +\gamma \left(\sum_{j=1}^ nv_j \log \left(\frac{v_j}{w_j}\right) -v_j+w_j\right)
 +\sum_{j=1}^n\beta_j \left(\sum_{i=1}^m t_{ij}- v_j\right)	
%\\	
%&= 
 % \sum_{ (i,j) \notin \mathcal{Z}}  
% (\beta_j -1)t_{ij}   + t_{ij}\log \left( \frac{t_{ij}}{s_{ij} }  \right) +s_{ij}
%+
 % \sum_{j=1}^ n (-\gamma -\beta_j ) v_j + \gamma v_j \log \left(\frac{v_j}{w_j}\right) 
\end{align*}
When $(i,j) \notin{\mathcal Z}$,
\begin{align*}
\frac{\partial L}{\partial t_{ij}}  =   \log \left( \frac{t_{ij}}{s_{ij} }  \right) +\beta_j  \qquad
\frac{\partial L}{\partial v_j}  =    \gamma \log \left( \frac{v_j}{w_j}\right) - \beta_j  
\end{align*}
Setting these to zero results in
\begin{align*}
\log \left( \frac{t^*_{ij}}{s_{ij} }  \right) =  -\beta_j\qquad
\quad \log \left( \frac{v^*_j}{w_j}\right)  =  \beta_j /\gamma 
\end{align*}
Hence,
$
t^*_{ij}
%&= s_{ij}\exp \left( -\beta_j -1 \right)	\nonumber	\\
 = s_{ij}c_{2j} 	
$
where
$
  c_{2j} =\exp  (- \beta_j)
$.
Also,
$
v^*_j = w_{j}\exp(\beta_j /\gamma)	
=  c_{2j} ^{-1/\gamma}w_{j} 
$.
Due to the column sum  constraints, we  must have
$
c_{2j}\sum_{i=1}^ms_{ij} = \sum_{i=1}^m t^*_{ij} =  v^*_j =c_{2j} ^{-1/\gamma}w_{j} 
%= {L} D_2^{-\gamma} \tilde{v}
$,
that is,
$
c_{2j}^{\frac{1+ \gamma}{\gamma}}\sum_{i=1}^ms_{ij}= w_j
$,
which implies \eqref{eq:c2j0}.
%\begin{align}
%\label{eq:d2jtilde}
%c_{2j}= \left(\frac{w_j}{\Sigma_{i=1}^ms_{ij}}\right)^{\frac{\gamma}{1+\gamma}}
%\end{align}

\end{proof}

\subsection{Proof of Theorem \ref{th:MSKA}}
\label{sec:prth1}
It follows, from Theorem \ref{th:Bregman} and Lemmas \ref{lem:lem2} and \ref{lem:lem3}, that the sequence,  $\{(T(l),v(l)\}$, converges to a limit, $(T^*, v^*)$,  with $T^* \in {\mathcal T}$, $v^* \in \mathbb{R}^n_+$,
and this limit satisfies the constraints
in \eqref{eq:rowconstraints3a}. We now only need to prove 
that $\nabla f(x(0))$ satisfies \eqref{eq:range}, where $x(0) = \text{vec}(K, \gamma \tilde{v})$.
With $J$ given by \eqref{eq:J(T,v)}, we have, for $t_{ij} >0$ and $v_j >0$,
\[
\frac{\partial J}{\partial t_{ij}}(T,v) = 
   \log \left( \frac{t_{ij}}{k_{ij} } \right) ,
\qquad \frac{\partial J}{\partial \gamma v_j}(T,v) =\log \left(\frac{\gamma v_j}{\gamma\tilde{v}_j}\right) 
\]
Hence,
%\[
$\frac{\partial J}{\partial t_{ij}}(K, \tilde{v}) =  0,\;\; \frac{\partial J}{\partial \gamma v_j}(K, \tilde{v}) =0$,
%\]
and
$\nabla f(x(0))_i = 0$ for  $i=1,2, \dots , q$. Therefore, $\nabla f(x(0))$ satisfies \eqref{eq:range}.

$\protect{\hspace*{7.75cm}} \blacksquare $

\section{Example: resource allocation in the sharing economy}
\label{sec:experiment}
SK-type iterations make sense for large-scale problems. Smart cities are a natural place to look for such problems, and, in particular,   sharing economy \cite{shorten} applications, since these are precisely the problem domains where allocation of resources at scale emerge, and where the scale of the problem is subject to temporal variations. One such problem arises in the context of charging $m$ electric vehicles (EVs) overnight in a city such as London. With the advent of vehicle-to-grid (V2G), vehicle-to-vehicle (V2V) and widespread availability of solar, it is likely that many entities that currently consume energy will become prosumers in the near future; i.e.\ most homes, and even cars, will  consume energy and also make energy available, depending on the circumstance. Such a scenario is clearly very large scale, with a potentially very large number, $m$, of EVs (there are currently more than 2.5M cars registered in London), and a large number, $n$, of energy providers (consisting of conventional utilities, energy brokerages, households and even other cars, and constituting the target agents of our OT setup in Section~\ref{sec:OTzero}). In the setting of this paper (Section~\ref{sec:OTzero}),  each EV specifies the energy it requires in KWhrs, and then this demand for energy is communicated (i.e.\ `transported') to a set of providers.  Each provider may set a cost based on their type of energy generation, their proximity to the car being charged, and the type of vehicle being charged. In addition, some providers may prohibit certain types of vehicles, for example plug-in hybrid vehicles (PHEVs), or vehicles that are very large in size. Our OT formulation (\ref{eq:GenOpt}) captures the realistic scenario in which the $n$ energy providers make available  flexible amounts of energy, nominally  $\tilde{v}$, to $m$ EVs whose demands are  exactly  $\tilde{u}$. Furthermore, energy transfer between specified provider-EV pairs, $(i,j) \in {\mathcal Z}$ (\ref{eq:U}), are not  allowed {\em a priori}.\newline

To simulate this scenario, consider $m=10,000$ EVs requiring charging and $n = 10$  energy providers. We simulate  the specified energy charging requirement, $\tilde{u}_i$, $i=1, \ldots, m$, of each EV via independent and uniform draws in the range $(0,1)$ KWhrs. We simulate the nominal available energy of each supplier, $\tilde{v}_j$, $j=1, \ldots, n$, in the same way. Therefore, (\ref{eq:ut=vt}) may not hold, i.e.\ the transport problem may be unbalanced, in that the total  energy required by the EVs may differ from the total nominal energy  made available by the providers. This UOT problem is also solved by our algorithms.  To effect prior zero constraints, we  assume that even-indexed cars are PHEVs, and that even-indexed providers will not supply these PHEVs. This defines a transport plan with $\frac{mn}{4}$ pre-specified zeroes. The non-zero elements of $K$ in (\ref{eq:K}) are obtained with $\tilde{t}_{ij} = 1$ and $\gamma_0 = 1.99$, 
and the transport cost, $c_{ij}$, $
\forall (i,j)\notin \mathcal {Z} $  are again iid  uniformly drawn from  $(0,1)$. We use Algorithm 1, with %$\gamma_0 = 2$ and $\gamma_2 = 1.99$.
  $\gamma = 1.005$ (\ref{eq:GenOpt}), to obtain the OT plan, $T^*$. 
Figure  \ref{fig:opt} illustrates the convergence of the algorithm for 10 simulation runs. 
 Let $T(l-1)$ and $T(l)$ be the transport plans obtained from two consecutive iterations of the algorithm in any one simulation, and let $\Delta(l)$ be the matrix with $(i,j)$th entry  defined as $\delta_{ij}(l) = | t_{ij}(l)-t_{ij}(l-1) |$.
Figure  \ref{fig:opt}  plots  the log of the sum of the $\delta_{ij}$s, normalised by the first {log-sum}, for each of the 10 simulations,
{ i.e.
%\[
$\frac{\log\left(\sum_{(i,j) \notin {\mathcal Z} }|t_{ij}(l) - t_{ij}(l-1)| \right)}
{\log\left( \sum_{(i,j) \notin {\mathcal Z} }  |t_{ij}(1) - t_{ij}(0)| \right) }$.
%\]
We have normalized in order to facilitate comparison of the different simulations.
}
%\begin{frame}
\begin{figure}[th]
        \centering
             \includegraphics[width=0.4\textwidth]{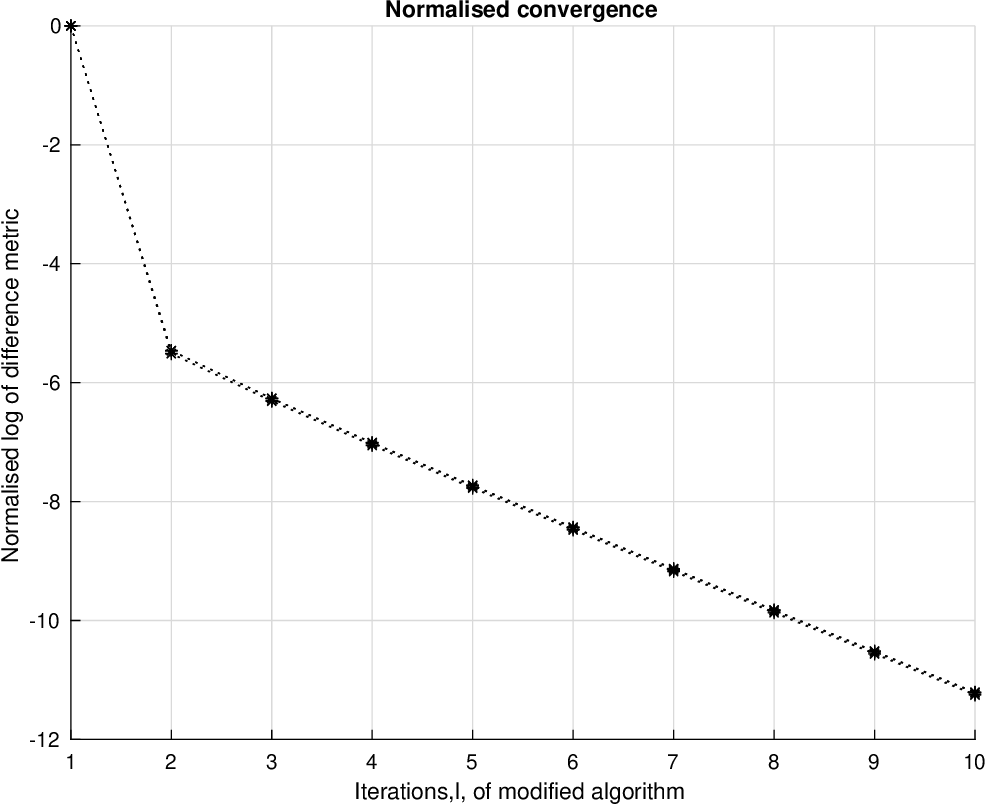}
       \caption{Convergence of the modified algorithm (Algorithm~1) }
        \label{fig:opt}
 \end{figure}
%\end{frame}
%For a specific realisation, $T^*$ is the block $T[1:5,1:5]$ of the optimal transportation plan. Notice that entries of the transportation plan are constrained to be zero.
%\begin{eqnarray}
%\label{eq:schwartz_m}
%T^* = 1e^{-6}\times\left[
%\begin{array}{ccccc}
%0.00    &  0.19  & 0.00$ & 0.26 &  0.00 \\
%0.08 &    0.00 & 0.05 & 0.00 & 0.098 \\
%0.00    & 045 & 0.00 & 0.15 & 0.00 \\
%0.12 & 0.00 & 0.20 & 0.00 & 0.23 \\
%0.00 & 0.11 & 0.00 & 0.12 & 0.00
%\end{array}
%\right] \nonumber
%\end{eqnarray}

%{[AQ: There's a problem  with this scenario if the transport is unbalanced: the net energy transported to the EVs may be greater than the pre-specified net amount provided by the sources. We can remediate this by enforcing balanxce via (\ref{eq:bal}), in which case reformulate (easy) the simulation method, and interpret this constraint in the text.]}\newline

%{[RS: Reply to Anthony, I was thinking of setting up the problem slightly differently. The EV's specify their requirements exactly, without any flexibility. The energy providers have some flexibility and may deviate away from their nominal promise to the grid (this is what happens now anyway). To the EVs transport their demand, as opposed to the energy being transported. Does this make sense? I have tried to adjust the paragraph to make this clear. ]}  

\vspace*{-.9cm}
\section{Conclusions}
\label{sec:conc}
In this paper, we have presented SK-type algorithms for constrained optimal transport. Specifically, our algorithms allow for transport plans that force some entries to be zero {\em a priori}.  The convergence proof  is based on Bregman-type ideas.  An example in resource allocation for the sharing economy is provided,  pointing to situations in which our algorithm is relevant. {Future work will investigate extension of this work to problems that incorporate other forms of regularisation terms \cite{FerradansAl2014}, and other potential use-cases.} %\cite{papadakis:tel-01246096}

\bibliographystyle{ieeetr}

\bibliography{bibliography,optTrans}

%\begin{IEEEkeywords}
%switched systems, descriptor systems,  nonlinear systems, Lyapunov functions, global uniform exponential stability.
%\end{IEEEkeywords}

%\IEEEpeerreviewmaketitle

\end{document}